\newtheorem{theorem}{Theorem}
\newtheorem{corollary}[theorem]{Corollary}
\newtheorem{definition}[theorem]{Definition}
\newtheorem{example}[theorem]{Example}
\newtheorem{lemma}[theorem]{Lemma}
\newtheorem{proposition}[theorem]{Proposition}
\newtheorem{remark}[theorem]{Remark}
\newenvironment{proof}[1][Proof]{\noindent\textbf{#1.} }{\ \rule{0.5em}{0.5em}}
\begin{document}

\title{Invariance Pressure for Control Systems}
\author{Fritz Colonius\\Institut f\"{u}r Mathematik, Universit\"{a}t Augsburg, Augsburg, Germany
\and Alexandre J. Santana and Jo\~{a}o A. N. Cossich\\Departamento de Matem\'{a}tica, Universidade Estadual de Maring\'{a}\\Maring\'{a}, Brazil}
\maketitle

\textbf{Abstract: }Notions of invariance pressure for control systems are
introduced based on weights for the control values. The equivalence is shown
between inner invariance pressure based on spanning sets of controls and on
invariant open covers, respectively. Furthermore, a number of properties of
invariance pressure are derived and it is computed for a class of linear systems.

\textbf{Key words:} invariance pressure, invariance entropy, control systems,
invariant covers, feedbacks

\section{Introduction\label{Section1}}

This paper extends the notion of invariance entropy for discrete-time and
continuous-time control systems to a notion of invariance pressure and discuss
some of its properties. Invariance entropy (and feedback invariance entropy)
indicates the amount of \textquotedblleft information\textquotedblright%
\ necessary in order to make a subset of the state space invariant, and is
closely related to minimal data rates. Basic references are the seminal paper
Nair, Evans, Mareels and Moran \cite{NEMM04} and the monograph Kawan
\cite{Kawa13}. Further studies of invariance entropy include Da Silva and
Kawan \cite{daSilKawa16a} for hyperbolic control sets, Da Silva \cite{daSil14}
for linear control systems on Lie groups and Colonius, Fukuoka and Santana
\cite{ColoFS13} for topological semigroups.

Invariance entropy is modeled with some analogy to topological entropy of
dynamical systems. A generalization of the latter notion is topological
pressure of dynamical systems where a potential function gives weights to the
points in the state space, cf., e.g., Walters \cite{Walt82}, Viana and
Oliveira \cite{VianO16} or Katok and Hasselblatt \cite{KatH95}. We will
construct a notion of invariance pressure that analogously is based on weights
for the control values.

The main result is the equivalence between the inner invariance pressure based
on spanning sets of controls, and on invariant open covers (see Theorem
\ref{teo8}). Furthermore, a number of properties of invariance pressure are
derived which are analogous to properties of topological pressure for
dynamical systems. Here, however, no full analogy should be expected, since no
notion of separated sets of controls is available. While inner invariance
pressure, as discussed in detail here, is a generalization of inner invariance
entropy, we indicate how also other notions of invariance entropy, in
particular, outer invariance entropy, can be generalized. Furthermore, some
properties of invariance entropy for continuous-time control systems are also
derived and the invariance pressure for a class of linear systems is computed.

The contents of this paper is as follows. Section \ref{Section2} constructs
inner invariance pressure based on spanning sets of controls and on invariant
open covers and shows that they are equivalent. Section \ref{Section3} proves
a number of properties of inner invariance pressure and indicates variants
based on different technical conditions. Finally, Section \ref{Section4}
analyzes invariance pressure for continuous-time control systems and computes
the invariance pressure for a class of linear systems.

\section{Invariance pressure for discrete-time systems\label{Section2}}

In this section we introduce the notion of invariance pressure for
discrete-time control systems. Then a feedback version is defined and it is
shown that these two notions are equivalent.

The considered class of discrete-time control systems have the form%
\begin{equation}
x_{k+1}=F(x_{k},u_{k}),k\in\mathbb{N}_{0}=\{0,1,\ldots\}, \label{2.1}%
\end{equation}
where $F:X\times U\rightarrow X$ and $(X,d)$ is a metric space and $U$ is a
topological space. We assume that $F_{u}:=F(\cdot,u)$ is continuous for every
$u\in U$. Define $\mathcal{U}:=U^{\mathbb{N}_{0}}$ as the set of all sequences
$\omega=(u_{k})_{k\in\mathbb{N}_{0}}$ of elements in the control range $U$. We
endow $\mathcal{U}$ which is the set of control sequences with the product
topology. Sometimes, we will assume that the set of control values $U$ is a
compact metric space, implying that also $\mathcal{U}$ is a compact metrizable
space. The shift $\theta$ on $\mathcal{U}$ is defined by $(\theta\omega
)_{k}=u_{k+1},k\in\mathbb{N}_{0}$. For $x_{0}\in X$ and $\omega\in\mathcal{U}$
the corresponding solution of (\ref{2.1}) will be denoted by%
\[
x_{k}=\varphi(k,x_{0},\omega),k\in\mathbb{N}_{0}.
\]
Where convenient, we also write $\varphi_{k,\omega}(\cdot):=\varphi
(k,\cdot,\omega)$. By induction, one sees that this map is continuous. Observe
also that this is a cocycle associated with the dynamical system on
$\mathcal{U}\times X$ given by%
\[
\Phi(k,\omega,x_{0})=(\theta^{k}\omega,\varphi(k,x_{0},\omega)),k\in
\mathbb{N}_{0},\omega\in\mathcal{U},x_{0}\in X.
\]
We note the following property which is of independent interest (it is not
used in the following).

\begin{proposition}
The shift $\theta$ is continuous and, if $F:X\times U\rightarrow X$ is
continuous, then $\Phi$ is a continuous dynamical system.
\end{proposition}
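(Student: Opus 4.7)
The plan is to verify the two assertions separately. For continuity of the shift $\theta$ on $\mathcal{U}=U^{\mathbb{N}_{0}}$ endowed with the product topology, I would simply invoke the universal property of the product: $\theta$ is continuous if and only if every coordinate projection $\pi_{k}\circ\theta:\mathcal{U}\to U$ is continuous. Since $\pi_{k}\circ\theta=\pi_{k+1}$, which is continuous by definition of the product topology, this is immediate, and iterating gives continuity of each $\theta^{k}$.

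For the second assertion, since the time parameter is discrete, it suffices to establish (i) the cocycle identity $\Phi(k+\ell,\omega,x_{0})=\Phi(k,\Phi(\ell,\omega,x_{0}))$, so that $\Phi(k,\cdot,\cdot)$ genuinely iterates, and (ii) continuity of the map $\Phi(k,\cdot,\cdot):\mathcal{U}\times X\to\mathcal{U}\times X$ for each fixed $k$. Point (i) reduces to the identity
\[
\varphi(k+\ell,x_{0},\omega)=\varphi\bigl(k,\varphi(\ell,x_{0},\omega),\theta^{\ell}\omega\bigr),
\]
which I would prove by induction on $k$ directly from the recursion $\varphi(k+1,x_{0},\omega)=F(\varphi(k,x_{0},\omega),u_{k})$ together with the definition of the shift.

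For (ii), continuity of the first component $\theta^{k}$ is already in hand. For the second, I would argue by induction on $k$ that $(\omega,x_{0})\mapsto\varphi(k,x_{0},\omega)$ is jointly continuous on $\mathcal{U}\times X$. The base case $k=0$ is the projection $(\omega,x_{0})\mapsto x_{0}$. For the inductive step I would write
\[
\varphi(k+1,x_{0},\omega)=F\bigl(\varphi(k,x_{0},\omega),\pi_{k}(\omega)\bigr),
\]
so that the inductive hypothesis, continuity of $\pi_{k}$, and the joint continuity of $F$ combine to give continuity of the composition.

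There is no real obstacle here — the argument is entirely routine. The only subtlety worth flagging is that this is precisely where the strengthened assumption on $F$ is used: the standing hypothesis from the setup, that each partial map $F_{u}$ is continuous in $x$, is insufficient to control the dependence of $\varphi(k,x_{0},\omega)$ on $\omega$ through $\pi_{k}(\omega)$, and one really needs the joint continuity of $F$ on $X\times U$ assumed in the proposition.
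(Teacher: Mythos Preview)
Your proposal is correct and follows essentially the same route as the paper: continuity of $\theta$ via the product topology (the paper checks that preimages of subbasic cylinder sets are open, which is equivalent to your projection argument $\pi_{k}\circ\theta=\pi_{k+1}$), and joint continuity of $(\omega,x_{0})\mapsto\varphi(k,x_{0},\omega)$ by induction on $k$ using the joint continuity of $F$. Your write-up is in fact more complete than the paper's, since you also verify the cocycle identity needed for $\Phi$ to be a dynamical system and flag exactly where the strengthened hypothesis on $F$ enters, both of which the paper's proof passes over in silence.
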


\begin{proof}
Continuity of $\theta$ follows since the sets of the form%
\[
W=W_{0}\times W_{1}\times\cdots\times W_{N}\times U\times\cdots\subset
U^{\mathbb{N}_{0}}%
\]
with $W_{i}\subset U$ open for all $i$ and $N\in\mathbb{N}$ form a subbasis of
the product topology and the preimages%
\[
\theta^{-1}W=U\times W_{0}\times W_{1}\times\cdots\times W_{N}\times
U\times\cdots
\]
are open. If $F$ is continuous, then induction shows that $\varphi
(k,x_{0},\omega)$ is continuous in $(x_{0},\omega)\in$ $X\times\mathcal{U}$
for all $k$.
\end{proof}

Throughout the text, we will consider a compact set $Q\subset X$ and denote by
$C(U,\mathbb{R})$ the set of all continuous function $f:U\rightarrow
\mathbb{R}$. We suppose that the set $Q$ is strongly invariant in the sense
that for all $x\in Q$ there is $u\in U$ with $F(x,u)\in\mathrm{int}Q$.
Clearly, this means that for all $x\in Q$ there is $\omega\in\mathcal{U}$ with
$\varphi(k,x,\omega)\in\mathrm{int}Q$ for all $k\geq1$. We are interested in
the minimal information to make $Q$ strongly invariant.

\begin{remark}
\label{rem2} At the end of Section \ref{Section3} we will comment on
possibilities to relax the property of strong invariance.
\end{remark}

\subsection{Inner invariance pressure}

The definition of inner invariance pressure will require the following notion
from Kawan \cite[p. 76]{Kawa13}.

\begin{definition}
Let $Q\subset X$ a compact set with nonempty interior and $n\in\mathbb{N}$. We
say that a subset $\mathcal{S}\subset\mathcal{U}$ is a strongly $(n,Q)$%
-\textbf{\textit{spanning set}} if for each $x\in Q$ there is $\omega
\in\mathcal{S}$ such that $\varphi(i,x,\omega)\in\mathrm{int}Q$ for
$i=1,\ldots,n$.
\end{definition}

The minimal cardinality of such a set is denoted by $r_{inv,int}%
(n,Q)\leq\infty$, and \cite[p. 76]{Kawa13} defines the \textit{\textbf{inner
invariance entropy}} of $Q$ by
\[
h_{inv,int}(Q)=\lim_{n\rightarrow\infty}\frac{1}{n}\log r_{inv,int}(n,Q).
\]
In order to construct the inner invariance pressure of control systems let for
$f\in C(U,\mathbb{R})$%
\[
(S_{n}f)(\omega):=\sum_{i=0}^{n-1}f(u_{i}),\ \ \ \omega=(u_{i})_{i\in
\mathbb{N}_{0}}\in\mathcal{U},
\]
and
\[
a_{n}(f,Q):=\inf\left\{  \sum_{\omega\in\mathcal{S}}e^{(S_{n}f)(\omega
)};\ \mathcal{S}\text{ strongly }(n,Q)\text{-spanning}\right\}  .
\]

\begin{definition}
\label{Definition_pressure1}For a discrete-time control system of the form
(\ref{2.1}), a strongly invariant compact set $Q\subset X$ and $f\in
C(U,\mathbb{R})$ consider
\begin{equation}
P_{int}(f,Q)=\lim_{n\rightarrow\infty}\frac{1}{n}\log a_{n}(f,Q). \label{2.2}%
\end{equation}
The \textbf{\textit{inner invariance pressure}} in $Q$ is the map
$P_{int}(\cdot,Q):C(U,\mathbb{R})\rightarrow\mathbb{R}\cup\{-\infty,\infty\}$.
\end{definition}

This definition deserves several comments. First observe that $P_{int}%
(f,Q)\geq0$ for $f\geq0$.

If $f=\mathbf{0}$ is the null function in $C(U,\mathbb{R})$, then
$\sum_{\omega\in\mathcal{S}}e^{(S_{n}\mathbf{0})(\omega)}=\sum_{\omega
\in\mathcal{S}}1=\#\mathcal{S}$, hence
\begin{align}
a_{n}(\mathbf{0},Q)  &  =\inf\left\{  \sum_{\omega\in\mathcal{S}}%
e^{(S_{n}\mathbf{0})(\omega)};\ \mathcal{S}\text{ strongly }%
(n,Q)\text{-spanning}\right\} \nonumber\\
&  =\inf\left\{  \#\mathcal{S};\ \mathcal{S}\text{ strongly }%
(n,Q)\text{-spanning}\right\} \nonumber\\
&  =r_{inv,int}(n,Q). \label{rinv}%
\end{align}
Taking the logarithm, dividing by $n$ and letting $n$ tend to $\infty$ one
finds that $P_{int}(\mathbf{0},Q)=h_{inv,int}(Q)$. Hence the inner invariance
pressure generalizes the inner invariance entropy.

Next we show that it is sufficient to consider finite spanning sets. More
precisely, the following holds.

\begin{proposition}
For a strongly invariant compact set $Q$ and $f\in C(U,\mathbb{R})$ it
suffices to taken in the definition of $a_{n}(f,Q)$ the infimum over all
finite strongly $(n,Q)$-spanning sets.
\end{proposition}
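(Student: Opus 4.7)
The plan is to reduce any spanning set to a finite one by compactness of $Q$, using that each summand $e^{(S_n f)(\omega)}$ is nonnegative so that passing to a subset only decreases the sum.

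First I would fix a strongly $(n,Q)$-spanning set $\mathcal{S}\subset\mathcal{U}$ (if none has finite sum, there is nothing to show), and for each $\omega\in\mathcal{S}$ introduce the ``return set''
\[
N_\omega := \bigcap_{i=1}^{n}\varphi(i,\cdot,\omega)^{-1}(\mathrm{int}\,Q)\cap Q.
\]
Since $F_u=F(\cdot,u)$ is continuous for every $u\in U$, the map $x\mapsto\varphi(i,x,\omega)$ is continuous for every fixed $\omega$ and $i$, as was noted just after the definition of $\varphi$. Therefore $N_\omega$ is open in $Q$ as a finite intersection of preimages of the open set $\mathrm{int}\,Q$.

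Next, the spanning property says exactly that $\{N_\omega\}_{\omega\in\mathcal{S}}$ is an open cover of the compact set $Q$. Hence there is a finite subcover, i.e.\ a finite subset $\mathcal{S}'\subset\mathcal{S}$ such that $Q=\bigcup_{\omega\in\mathcal{S}'}N_\omega$; equivalently, $\mathcal{S}'$ is itself strongly $(n,Q)$-spanning. Since $e^{(S_n f)(\omega)}>0$ for every $\omega$, deleting terms only decreases the (possibly infinite) sum, so
\[
\sum_{\omega\in\mathcal{S}'}e^{(S_n f)(\omega)}\;\leq\;\sum_{\omega\in\mathcal{S}}e^{(S_n f)(\omega)}.
\]
Taking the infimum on the right over all strongly $(n,Q)$-spanning sets and on the left over the finite ones then gives the inequality $\inf_{\text{finite}}\leq a_n(f,Q)$, and the reverse inequality is immediate because every finite spanning set is in particular a spanning set. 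This yields equality.

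No step looks like a real obstacle; the only point to watch is that the potentially uncountable sum $\sum_{\omega\in\mathcal{S}}e^{(S_n f)(\omega)}$ is interpreted in $[0,\infty]$ as a supremum over finite partial sums, which makes the monotonicity under taking subsets automatic and justifies the reduction to a finite subcover.
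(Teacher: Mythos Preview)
Your proof is correct and follows essentially the same approach as the paper: both arguments use continuity of $x\mapsto\varphi(i,x,\omega)$ to produce an open cover of the compact set $Q$, extract a finite subcover, and then invoke positivity of the summands to compare the infima. The only cosmetic difference is that the paper indexes its cover by points $x\in Q$ (choosing for each $x$ a control $\omega_x$ and a neighborhood $W_x$), whereas you index directly by controls $\omega\in\mathcal{S}$ via the preimage sets $N_\omega$; your formulation is arguably a bit cleaner.
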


\begin{proof}
First we show for a strongly $(n,Q)$-spanning set $\mathcal{S}$ there exists a
finite strongly $(n,Q)$-spanning set $\mathcal{S}^{\prime}\subset\mathcal{S}$.
In fact, take an arbitrary $x\in Q$. Since $\mathcal{S}$ is strongly
$(n,Q)$-spanning, there is $\omega_{x}\in\mathcal{S}$ with $y_{j}%
:=\varphi(j,x,\omega_{x})\in\mathrm{int}Q$ for $j=1,\ldots,n$. By continuity,
we find open neighborhoods $W_{1},\ldots,W_{n}$ of $x$ such that
$\varphi(j,W_{j},\omega_{x})\subset\mathrm{int}Q$, for all $j=1,\ldots,n$. The
sets $W_{x}=\bigcap_{i=1}^{n}W_{i},x\in Q$, form an open cover of $Q$. By
compactness of $Q$ there are finitely $x_{1},\ldots,x_{k}\in Q$ such that
$Q\subset\bigcup_{i=1}^{k}W_{x_{i}}$. Then $\mathcal{S}^{\prime}%
=\{\omega_{x_{1}},\ldots,\omega_{x_{k}}\}\subset\mathcal{S}$ is strongly
$(n,Q)$-spanning.

To conclude the proof, set
\[
\widetilde{a}_{n}(f,Q)=\inf\left\{  \sum_{\omega\in\mathcal{S}}e^{(S_{n}%
f)(\omega)};\ \mathcal{S}\text{ is a finite strongly }%
(n,Q)\mbox{-spanning set}\right\}  .
\]
It is clear that $a_{n}(f,Q)\leq\widetilde{a}_{n}(f,Q)$. For the reverse
inequality, let $\mathcal{S}$ be strongly $(n,Q)$-spanning. Then, as shown
above, there is a finite strongly $(n,Q)$-spanning subset $\mathcal{S}%
^{\prime}\subset\mathcal{S}$. Hence
\[
\sum_{\omega\in\mathcal{S}^{\prime}}e^{(S_{n}f)(\omega)}\leq\sum_{\omega
\in\mathcal{S}}e^{(S_{n}f)(\omega)},
\]
implying that $\widetilde{a}_{n}(f,Q)\leq a_{n}(f,Q)$ and then equality is proved.
\end{proof}

Based on this result, in the following we will only consider finite spanning
sets. We still have to show that the limit in (\ref{2.2}) actually exists.

\begin{proposition}
\label{limex1} For $f\in C(U,\mathbb{R})$, the following limit exists and
satisfies%
\[
\lim_{n\rightarrow\infty}\frac{1}{n}\log a_{n}(f,Q)=\inf_{n\geq1}\frac{1}%
{n}\log a_{n}(f,Q).
\]

\end{proposition}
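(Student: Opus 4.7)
The plan is to apply Fekete's subadditive lemma to the sequence $b_n := \log a_n(f,Q)$. It suffices to establish the submultiplicativity
\[
a_{n+m}(f,Q) \leq a_n(f,Q)\cdot a_m(f,Q) \quad \text{for all } n,m \geq 1,
\]
since the previous proposition allows us to work with finite spanning sets, so each $a_n(f,Q)$ is a finite positive real number (strong invariance of $Q$ ensures a finite strongly $(n,Q)$-spanning set exists, making the infimum finite; positivity is automatic because the summands are exponentials).

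The submultiplicativity will be proved by concatenation of control sequences. Given finite strongly $(n,Q)$-spanning $\mathcal{S}_1$ and finite strongly $(m,Q)$-spanning $\mathcal{S}_2$, define for $\omega = (u_i) \in \mathcal{S}_1$ and $\omega' = (u_j') \in \mathcal{S}_2$ the concatenation $\omega \ast \omega'$ whose $i$-th entry is $u_i$ for $0 \leq i < n$ and $u'_{i-n}$ for $i \geq n$. Set
\[
\mathcal{S} := \{\omega \ast \omega' : \omega \in \mathcal{S}_1,\ \omega' \in \mathcal{S}_2\}.
\]
To verify that $\mathcal{S}$ is strongly $(n+m,Q)$-spanning, pick any $x \in Q$, choose $\omega \in \mathcal{S}_1$ with $\varphi(i,x,\omega) \in \mathrm{int}\,Q$ for $i = 1,\ldots,n$, set $y := \varphi(n,x,\omega) \in \mathrm{int}\,Q \subset Q$, and choose $\omega' \in \mathcal{S}_2$ with $\varphi(j,y,\omega') \in \mathrm{int}\,Q$ for $j = 1,\ldots,m$. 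Since the first $n$ entries of $\omega \ast \omega'$ agree with those of $\omega$, the first $n$ states of the trajectory from $x$ under $\omega \ast \omega'$ lie in $\mathrm{int}\,Q$; the cocycle identity then gives $\varphi(n+j, x, \omega \ast \omega') = \varphi(j, y, \omega') \in \mathrm{int}\,Q$ for $j = 1,\ldots,m$.

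Because $(S_{n+m} f)(\omega \ast \omega') = (S_n f)(\omega) + (S_m f)(\omega')$, the sum over $\mathcal{S}$ factors:
\[
\sum_{\sigma \in \mathcal{S}} e^{(S_{n+m} f)(\sigma)} \leq \Bigl(\sum_{\omega \in \mathcal{S}_1} e^{(S_n f)(\omega)}\Bigr)\Bigl(\sum_{\omega' \in \mathcal{S}_2} e^{(S_m f)(\omega')}\Bigr).
\]
Taking the infimum on the right over $\mathcal{S}_1$ and $\mathcal{S}_2$ yields $a_{n+m}(f,Q) \leq a_n(f,Q)\,a_m(f,Q)$, so $b_n = \log a_n(f,Q)$ is subadditive.

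Fekete's lemma then provides existence of the limit and the identification with the infimum. The only non-routine step is the concatenation argument; the main thing to be careful about is ensuring via the cocycle property that passing through the ``join point'' at time $n$ preserves the strongly-spanning condition, which is where the inclusion $\mathrm{int}\,Q \subset Q$ (giving $y \in Q$) is used to apply the spanning property of $\mathcal{S}_2$ at $y$.
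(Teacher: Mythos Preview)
Your proof is correct and follows essentially the same approach as the paper: both establish subadditivity of $\log a_n(f,Q)$ by concatenating strongly $(n,Q)$- and $(m,Q)$-spanning sets, using the cocycle property to verify the concatenation is strongly $(n+m,Q)$-spanning, factoring the exponential sum, and then invoking Fekete's lemma. Your write-up is in fact slightly more careful than the paper's in noting finiteness and positivity of $a_n(f,Q)$ and in highlighting the role of $\mathrm{int}\,Q\subset Q$ at the join point.
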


\begin{proof}
This follows by a standard lemma in this context (cf., e.g., Walters
\cite[Theorem 4.9]{Walt82} or Kawan \cite[Lemma B.3]{Kawa13}), if we can show
that the sequence $\log a_{n}(f,Q),n\in\mathbb{N}$, is subadditive. Let
$\mathcal{S}_{1}$ be an strongly $(n,Q)$-spanning set and $\mathcal{S}_{2}$ a
strongly $(k,Q)$-spanning set. Then define control sequences of length $n+k$
by
\[
\omega:=(u_{0},\ldots,u_{n-1},v_{0},\ldots,v_{k-1})\in U^{n+k}.
\]
for each $\omega_{1}=(u_{0},\ldots,u_{n-1})\in\mathcal{S}_{1}$ and $\omega
_{2}=(v_{0},\ldots,v_{k-1})\in\mathcal{S}_{2}$. We claim that the set
$\mathcal{S}$ of these control sequences is strongly $(n+k,Q)$-spanning. In
fact, for $x\in Q$ there exist $\omega_{1}\in\mathcal{S}_{1}$ such that
\[
\varphi(j,x,\omega)=\varphi(j,x,\omega_{1})\in\mathrm{int}Q,j=1,\ldots,n.
\]
Since $\varphi(n,x,\omega_{1})\in\mathrm{int}Q\subset Q$ and $\mathcal{S}_{2}$
is strongly $(k,Q)$-spanning, there is a $\omega_{2}\in\mathcal{S}_{2}$ such
that
\[
\varphi(n+j,x,\omega)=\varphi(j,\varphi(n,x,\omega_{1}),\omega_{2}%
)\in\mathrm{int}Q,j=1,\ldots,k.
\]
This shows the claim. Furthermore, for all $\mathcal{S}_{1}$ and
$\mathcal{S}_{2}$%
\begin{equation}
\sum_{\omega\in\mathcal{S}}e^{(S_{n+k}f)(\omega)}=\sum_{\omega\in\mathcal{S}%
}e^{(S_{n}f)(\omega_{1})}e^{(S_{k}f)(\omega_{2})}\leq\sum_{\omega_{1}%
\in\mathcal{S}_{1}}e^{(S_{n}f)(\omega_{1})}\sum_{\omega_{2}\in\mathcal{S}_{2}%
}e^{(S_{k}f)(\omega_{2})}.\nonumber
\end{equation}
Hence $a_{n+k}(f,Q)\leq a_{n}(f,Q)a_{k}(f,Q)$ and the subadditivity property
follows proving the assertion.
\end{proof}

The following example illustrates the definition of invariance pressure in a
simple case.

\begin{example}
Assume that $f\in C(U,\mathbb{R})$ is bounded below (which, naturally, holds,
if $U$ is compact) and that $F(Q,U)\subset\mathrm{int}Q$, that is, the system
always enters the interior of $Q$ when starting in $Q$. We show that
$P_{int}(f,Q)=\inf f$. Since for every strongly $(n,Q)$-spanning set
$\mathcal{S}$ the estimate%
\[
\sum_{\omega\in\mathcal{S}}e^{(S_{n}f)(\omega)}\geq e^{n\inf f}\cdot
\#\mathcal{S\geq}e^{n\inf f}%
\]
holds, it follows that $P_{int}(f,Q)\geq\inf f$. Conversely, our assumption
implies that for $\varepsilon>0$ there exists $u\in U$ with
\[
f(u)\leq\inf f+\varepsilon.
\]
Then the one-point set $\mathcal{S}=\{\omega\}$, where $\omega=(u,u,\ldots)$,
is strongly $(n,Q)$-spanning and%
\[
\sum_{\omega\in\mathcal{S}}e^{(S_{n}f)(\omega)}=e^{(S_{n}f)(\omega)}%
=e^{nf(u)}\leq e^{n\inf f+n\varepsilon}.
\]
Taking the infimum over all strongly $(n,Q)$-spanning sets one finds that the
invariance pressure satisfies%
\[
P_{int}(f,Q)=\lim_{n\rightarrow\infty}\frac{1}{n}\log a_{n}(f,Q)\leq
\lim_{n\rightarrow\infty}\frac{1}{n}\log e^{n\inf f}+\varepsilon=\inf
f+\varepsilon.
\]
Since $\varepsilon>0$ is arbitrary, it follows that $P_{int}(f,Q)\leq\inf f$.
\end{example}

\subsection{Topological feedback pressure}

Next we introduce a notion of invariance pressure based on feedbacks and show
that it coincides with the invariance pressure defined above.

Open covers in entropy theory of dynamical systems are replaced in case of
control systems by invariant open covers, introduced in Nair et al.
\cite{NEMM04}. For control systems of the form (\ref{2.1}) they have the
following form.

\begin{definition}
For a compact subset $Q\subset X$ an invariant open cover $\mathcal{C}%
=(\mathcal{A},\tau,G)$ is given by $\tau\in\mathbb{N}$, a finite open cover
$\mathcal{A}$ of $Q$ and a map $G:\mathcal{A}\rightarrow U^{\tau}$ assigning
to each set $A$ in $\mathcal{A}$ a control function such that $\varphi
(k,A,G(A))\subset Q$ for all $k\in\{1,\ldots,\tau\}$.
\end{definition}

Here $G(A)$ may be considered as a feedback when applied to the elements of
$A$. Let $\mathcal{C}=(\mathcal{A},\tau,G)$ be an invariant open cover. For
any sequence $\alpha=(A_{i})_{i\in\mathbb{N}_{0}}\in\mathcal{A}^{\mathbb{N}%
_{0}}$, we have the control sequence
\[
\omega(\alpha):=(u_{0},u_{1},\ldots)\ \ \ \mbox{ with }(u_{l})_{l=(i-1)\tau
}^{i\tau-1}=G(A_{i-1}),\ \text{for all }i\geq1,
\]
that is,
\[
\omega(\alpha)=(\underbrace{u_{0},\ldots,u_{\tau-1}}_{G(A_{0})},\underbrace
{u_{\tau},\ldots,u_{2\tau-1}}_{G(A_{1})},\ldots).
\]
Then we can define, for each $n\in\mathbb{N}$, the set
\begin{equation}
B_{n}(\alpha):=\{x\in X;\ \varphi(i\tau,x,\omega(\alpha))\in A_{i}\text{ for
}i=0,1,\ldots,n-1\}. \label{B_n}%
\end{equation}
Observe that $B_{n}(\alpha)$ is open in $Q$ and that the control
$\omega(\alpha)$ is uniquely determined by $\alpha$, but not necessarily by
the set $B_{n}(\alpha)$. For each $n\in\mathbb{N}$, letting $\alpha$ run
through all sequences of elements in $\mathcal{A}$, the family
\[
\mathcal{B}_{n}=\mathcal{B}_{n}(\mathcal{C}):=\{B_{n}(\alpha);\ \alpha
\in\mathcal{A}^{\mathbb{N}_{0}}\}
\]
is a finite open cover of $Q$. Here, and in the following, it is used tacitly
that only the first $n$ elements of $\alpha$ are relevant.

We say that a set of controls of the form%
\[
\mathcal{W}_{n}=\{\omega(\alpha_{i});\alpha_{i}\in\mathcal{A}^{\mathbb{N}_{0}%
}\text{ for }i\in I\}
\]
is a generating set of feedback controls (of length $n\tau$) for the invariant
open cover $\mathcal{C}$, if the sets $B_{n}(\alpha_{i}),i\in I$, form a
subcover of $\mathcal{B}_{n}(\mathcal{C})$ which is minimal in the sense that
none of its elements may be omitted in order to cover $Q$. (Its number of
elements needs not be minimal among all subcovers.) Hence $Q=\bigcup_{i\in
I}B_{n}(\alpha_{i})$ and the number of elements $\#I$ in the index set $I$ is
bounded by $\#\mathcal{B}_{n}$.

Define for $\omega=(u_{i})_{i\in\mathbb{N}_{0}}\in\mathcal{U}$%
\[
(S_{n\tau})(\omega)=\sum_{i=0}^{n\tau-1}f(u_{i}),
\]
and set
\[
q_{n}(f,Q,\mathcal{C})=\inf\left\{  \sum_{\omega\in\mathcal{W}_{n}%
}e^{(S_{n\tau}f)(\omega)};\mathcal{W}_{n}\text{ generating for }%
\mathcal{C}\right\}  .
\]

\begin{definition}
Consider a discrete-time control system of the form (\ref{2.1}), a strongly
invariant compact set $Q\subset X$ and $f\in C(U,\mathbb{R})$. For an
invariant open cover $\mathcal{C}=(\mathcal{A},\tau,G)$, put
\begin{equation}
P_{fb}(f,Q,\mathcal{C})=\lim_{n\rightarrow\infty}\frac{1}{n\tau}\log
q_{n}(f,Q,\mathcal{C}) \label{2.3}%
\end{equation}
and
\[
P_{fb}(f,Q)=\inf\{P_{fb}(f,Q,\mathcal{C});\ \mathcal{C}%
\mbox{ is an invariant open cover of }Q\}.
\]
The \textbf{invariance}\textit{\textbf{ feedback pressure}} is the map
$P_{fb}(\cdot,Q):C(U,\mathbb{R})\rightarrow\mathbb{R}\cup\{-\infty,\infty\}.$
\end{definition}

Here are several comments on this definition. If $f=\mathbf{0}$ is the null
function in $C(U,\mathbb{R})$, then
\[
\sum_{\omega\in\mathcal{W}_{n}}e^{(S_{n}\mathbf{0})(\omega)}=\sum_{\omega
\in\mathcal{W}_{n}}1=\#\mathcal{W}_{n},
\]
hence
\begin{align*}
q_{n}(\mathbf{0},Q,\mathcal{C})  &  =\inf\left\{  \sum_{\omega\in
\mathcal{W}_{n}}e^{(S_{n\tau}0)(\omega)};\mathcal{W}_{n}\text{ generating for
}\mathcal{C}\right\} \\
&  =\inf\left\{  \#\mathcal{B};\ \mathcal{B}\text{ a subcover of }%
\mathcal{B}_{n}\right\}  =N(\mathcal{B}_{n};Q),
\end{align*}
where $N(\mathcal{B}_{n};Q)$ denotes the minimal number of elements in a
subcover of $\mathcal{B}_{n}$. Hence one finds that the strong topological
feedback entropy $h_{fb}(\mathcal{C})$ of $\mathcal{C}$ (as defined in Kawan
\cite[p. 70]{Kawa13}) satisfies
\[
h_{fb}(\mathcal{C}):=\lim_{n\rightarrow\infty}\frac{1}{n\tau}\log
N(\mathcal{B}_{n};Q)=\limsup_{n\rightarrow\infty}\frac{1}{n\tau}\log
q_{n}(\mathbf{0},Q,\mathcal{C})=P_{fb}(\mathbf{0},\mathcal{C}),
\]
and so the strong topological feedback entropy of system (\ref{2.1}) satisfies%
\begin{align*}
h_{fb}(Q)  &  :=\inf\{h_{fb}(\mathcal{C});\ \ \mathcal{C}\text{ an invariant
open cover of }Q\}\\
&  =\inf\{P_{fb}(\mathbf{0},\mathcal{C});\ \mathcal{C}\text{ an invariant open
cover of }Q\}=P_{fb}(\mathbf{0},Q).
\end{align*}
Hence the invariance feedback pressure is a generalization of the strong
topological feedback entropy.

The following lemma provides the remaining proof that the limit in (\ref{2.3})
actually exists.

\begin{lemma}
\label{limex2} If $f\in C(U,\mathbb{R})$ and $\mathcal{C}=(\mathcal{A}%
,\tau,G)$ is an invariant open cover of $Q$, then the following limit exists
and satisfies%
\[
\lim_{n\rightarrow\infty}\frac{1}{n}\log q_{n}(f,Q,\mathcal{C})=\inf_{n\geq
1}\frac{1}{n}\log q_{n}(f,Q,\mathcal{C}).
\]

\end{lemma}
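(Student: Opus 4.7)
The plan is to reduce the existence of the limit to Fekete's subadditive lemma (cited already as Walters \cite[Theorem 4.9]{Walt82} or Kawan \cite[Lemma B.3]{Kawa13}), exactly as was done for $a_n(f,Q)$ in Proposition \ref{limex1}. Thus the whole task comes down to proving the submultiplicative estimate
\[
q_{n+k}(f,Q,\mathcal{C}) \leq q_n(f,Q,\mathcal{C})\, q_k(f,Q,\mathcal{C}).
\]
Since $\mathcal{C}=(\mathcal{A},\tau,G)$ is fixed and $\mathcal{A}$ is finite, one also notes along the way that each $q_n(f,Q,\mathcal{C})$ is finite (generating subsets always exist because $\mathcal{B}_n$ is a finite open cover of $Q$), so subadditivity of $\log q_n$ is meaningful.

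To prove submultiplicativity, I would pick generating sets $\mathcal{W}_n=\{\omega(\alpha_i):i\in I\}$ for $\mathcal{C}$ of length $n\tau$ and $\mathcal{W}_k=\{\omega(\gamma_l):l\in L\}$ of length $k\tau$, and build candidates for the length $(n+k)\tau$ by concatenation: for each $(i,l)\in I\times L$ define $\beta_{i,l}\in\mathcal{A}^{\mathbb{N}_0}$ whose first $n$ entries are those of $\alpha_i$ and whose next $k$ entries are those of $\gamma_l$. From the construction of $\omega(\cdot)$ we obtain
\[
\omega(\beta_{i,l})_j=\omega(\alpha_i)_j\ \text{for }0\le j<n\tau,\qquad \omega(\beta_{i,l})_{n\tau+j}=\omega(\gamma_l)_j\ \text{for }0\le j<k\tau.
\]
The key point to verify is that $\{B_{n+k}(\beta_{i,l}):(i,l)\in I\times L\}$ covers $Q$. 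Given $x\in Q$, choose $i$ with $x\in B_n(\alpha_i)$. The invariant open cover condition $\varphi(k,A,G(A))\subset Q$ for $k\in\{1,\dots,\tau\}$, applied blockwise, forces $y:=\varphi(n\tau,x,\omega(\alpha_i))\in Q$ (this is the one substantive step where the invariance of $\mathcal{C}$ is actually used). Then choose $l$ with $y\in B_k(\gamma_l)$, and the cocycle identity gives $\varphi((n+j)\tau,x,\omega(\beta_{i,l}))=\varphi(j\tau,y,\omega(\gamma_l))\in A_j^{(\gamma_l)}$, so $x\in B_{n+k}(\beta_{i,l})$.

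Since this concatenated family covers $Q$, one can discard redundant elements to obtain a minimal (hence generating) subcover $\mathcal{W}_{n+k}$, and the weight sum only decreases under this pruning. Using $(S_{(n+k)\tau}f)(\omega(\beta_{i,l}))=(S_{n\tau}f)(\omega(\alpha_i))+(S_{k\tau}f)(\omega(\gamma_l))$, the bound factors:
\[
\sum_{\omega\in\mathcal{W}_{n+k}}e^{(S_{(n+k)\tau}f)(\omega)}\le\sum_{(i,l)\in I\times L}e^{(S_{n\tau}f)(\omega(\alpha_i))}e^{(S_{k\tau}f)(\omega(\gamma_l))}=\Bigl(\sum_{i\in I}e^{(S_{n\tau}f)(\omega(\alpha_i))}\Bigr)\Bigl(\sum_{l\in L}e^{(S_{k\tau}f)(\omega(\gamma_l))}\Bigr).
\]
Taking infimum over $\mathcal{W}_n$ and $\mathcal{W}_k$ yields $q_{n+k}(f,Q,\mathcal{C})\le q_n(f,Q,\mathcal{C})q_k(f,Q,\mathcal{C})$, and Fekete's lemma gives the conclusion. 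The only mildly delicate step is confirming $y\in Q$ at the splicing time $n\tau$; everything else is bookkeeping on the concatenation and the pruning to a minimal subcover.
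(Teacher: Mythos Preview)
Your proposal is correct and follows essentially the same route as the paper: concatenate generating sets $\mathcal{W}_n$ and $\mathcal{W}_k$ at the level of sequences in $\mathcal{A}^{\mathbb{N}_0}$, check that the resulting family of $B_{n+k}$-sets covers $Q$, prune to a minimal subcover, and factor the exponential sums to obtain submultiplicativity and hence Fekete's lemma. The paper packages the covering step via the identity $B_{n+k}(\alpha_i\beta_j)=B_n(\alpha_i)\cap\varphi_{n\tau,\omega(\alpha_i\beta_j)}^{-1}B_k(\beta_j)$, whereas you argue it directly by tracking the point $y=\varphi(n\tau,x,\omega(\alpha_i))$ and invoking the invariance of $\mathcal{C}$ to ensure $y\in Q$; these are the same argument in slightly different dress.
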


\begin{proof}
The assertions will follow from Walters \cite[Theorem 4.9]{Walt82} if the
sequence $\log q_{n}(f,Q,\mathcal{C}),n\in\mathbb{N}$, is subadditive. This
will be shown by constructing a generating set $\mathcal{W}_{n+k}$ from
generating sets $\mathcal{W}_{n}$ and $\mathcal{W}_{k}$ with the desired properties.

Let $\mathcal{W}_{n}=\{\omega(\alpha_{i_{1}}),\ldots,\omega(\alpha_{i_{M}})\}$
and $\mathcal{W}_{k}=\{\omega(\beta_{i_{1}}),\ldots,\omega(\beta_{i_{K}})\}$
be generating sets of feedback controls. Here $\alpha_{i}$ and $\beta_{j}$ are
given by sequences of sets in $\mathcal{A}$ in the form $\alpha_{i}=\left(
A_{\sigma}^{\alpha_{i}}\right)  _{\sigma}$ and $\beta_{j}=\left(  A_{\sigma
}^{\beta_{i}}\right)  _{\sigma}$. Then define for all $i$ and $j$ sequences in
$\mathcal{A}$ by
\[
\alpha_{i}\beta_{j}=\left(  A_{0}^{\alpha_{i}},\ldots,A_{n-1}^{\alpha_{i}%
},A_{0}^{\beta_{j}},\ldots,A_{k-1}^{\beta_{j}},\ldots\right)  .
\]
If we denote by $A_{\sigma}^{\alpha_{i}\beta_{j}}$ the $\sigma$th element of
$\alpha_{i}\beta_{j}$, then%
\[
A_{\sigma}^{\alpha_{i}\beta_{j}}=\left\{
\begin{array}
[c]{rcl}%
A_{\sigma}^{\alpha_{i}}, & \mbox{if} & 0\leq\sigma\leq n-1\\
A_{\sigma-n}^{\beta_{j}}, & \mbox{if} & \sigma\geq n.
\end{array}
\right.
\]
\textbf{Claim:} The set%
\begin{equation}
\left\{  \omega(\alpha_{i}\beta_{j});i\in\{i_{1},\ldots,i_{M}\},j\in
\{j_{1},\ldots,j_{K}\}\right\}  \label{2.4}%
\end{equation}
contains a generating set of feedback controls.

First note that by the cocycle property one finds for $\sigma=0,\ldots,k$%
\[
\varphi_{(\sigma+n)\tau,\omega(\alpha_{i}\beta_{j})}=\varphi_{\sigma
\tau,(\theta^{n\tau}\omega(\alpha_{i}\beta_{j}))}\circ\varphi_{n\tau
,\omega(\alpha_{i}\beta_{j})}=\varphi_{\sigma\tau,\omega(\beta_{j})}%
\circ\varphi_{n\tau,\omega(\alpha_{i})},
\]
and hence%
\[
\varphi_{(\sigma+n)\tau,\omega(\alpha_{i}\beta_{j})}^{-1}=\varphi
_{n\tau,\omega(\alpha_{i})}^{-1}\circ\varphi_{\sigma\tau,\omega(\beta_{j}%
)}^{-1}.
\]
Thus for all $i$ and $j$%
\begin{equation}
B_{n+k}(\alpha_{i}\beta_{j})=B_{n}(\alpha_{i})\cap\varphi_{n\tau,\omega
(\alpha_{i}\beta_{j})}^{-1}B_{k}(\beta_{j}). \label{B_n+k}%
\end{equation}
In fact,%
\begin{align*}
B_{n+k}(\alpha_{i}\beta_{j})  &  =\displaystyle\bigcap_{\sigma=0}%
^{n+k-1}\varphi_{\sigma\tau,\omega(\alpha_{i}\beta_{j})}^{-1}(A_{\sigma
}^{\alpha_{i}\beta_{j}})\\
&  =\displaystyle\bigcap_{\sigma=0}^{n-1}\varphi_{\sigma\tau,\omega(\alpha
_{i}\beta_{j})}^{-1}(A_{\sigma}^{\alpha_{i}\beta_{j}})\cap\varphi
_{n\tau,\omega(\alpha_{i}\beta_{j})}^{-1}\biggl[\ \displaystyle\bigcap
_{\sigma=0}^{k-1}\varphi_{\sigma\tau,\theta^{n\tau}\omega(\alpha_{i}\beta
_{j})}^{-1}(A_{\sigma+n}^{\alpha_{i}\beta_{j}})\biggr]\\
&  =\displaystyle\bigcap_{\sigma=0}^{n-1}\varphi_{\sigma\tau,\omega(\alpha
_{i})}^{-1}(A_{\sigma}^{\alpha_{i}})\cap\varphi_{n\tau,\omega(\alpha_{i}%
\beta_{j})}^{-1}\biggl[\ \displaystyle\bigcap_{\sigma=0}^{k-1}\varphi
_{\sigma\tau,\omega(\beta_{j})}^{-1}(A_{\sigma}^{\beta_{j}})\biggr]\\
&  =B_{n}(\alpha_{i})\cap\varphi_{n\tau,\omega(\alpha_{i}\beta_{j})}^{-1}%
B_{k}(\beta_{j}).
\end{align*}
Clearly the sets $B_{n+k}(\alpha_{i}\beta_{j})$ are elements of $\mathcal{B}%
_{n+k}(\mathcal{C})$. It follows from (\ref{B_n+k}) that they cover $Q$, since
this is valid for the families $\left\{  B_{n}(\alpha_{i});i\in\{i_{1}%
,\ldots,i_{M}\}\right\}  $ and $\left\{  B_{n}(\beta_{j});j\in\{j_{1}%
,\ldots,j_{K}\}\right\}  $. Hence the collection in (\ref{2.4}) is a subcover
of $\mathcal{B}_{n+k}(\mathcal{C})$ and one finds in the family (\ref{2.4}) an
associated generating set of feedback controls which we denote by
$\mathcal{W}_{n+k}$. Thus the \textbf{Claim} is proved.

In order to show subadditivity of the sequence $\log q_{n}(f,Q,\mathcal{C}%
),n\in\mathbb{N}$, note that for all $n,k\in\mathbb{N}$%
\begin{align}
\sum_{\omega\in\mathcal{W}_{n+k}}e^{(S_{(n+k)\tau}f)(\omega)}  &
=\sum_{\omega\in\mathcal{W}_{n+k}}e^{(S_{n\tau}f)(\omega)}e^{(S_{k\tau
}f)(\theta^{n\tau}\omega)}\nonumber\\
&  \leq\sum_{\omega\in\mathcal{W}_{n}}e^{(S_{n\tau}f)(\omega)}\sum_{\omega
\in\mathcal{W}_{k}}e^{(S_{k\tau}f)(\omega)}.\nonumber
\end{align}
Since $\mathcal{W}_{n}$ and $\mathcal{W}_{k}$ are arbitrary it follows that
$q_{n+k}(f,Q,\mathcal{C})\leq q_{n}(f,Q,\mathcal{C})\cdot q_{k}%
(f,Q,\mathcal{C})$. This implies the required subadditivity concluding the proof.
\end{proof}

Next we show that this feedback invariance pressure coincides with the inner
invariance pressure introduced in Definition \ref{Definition_pressure1}. This
generalizes a result for invariance entropy from Colonius, Kawan and Nair
\cite{ColoKN13}.

\begin{theorem}
\label{teo8}If $f\in C(U,\mathbb{R})$ and $Q$ is a strongly invariant compact
subset of $X$, then
\[
P_{int}(f,Q)=P_{fb}(f,Q).
\]

\end{theorem}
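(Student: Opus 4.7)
The plan is to establish both inequalities $P_{fb}(f,Q) \leq P_{int}(f,Q)$ and $P_{int}(f,Q) \leq P_{fb}(f,Q)$, generalising the strategy used in \cite{ColoKN13} for the analogous entropy identity $h_{fb}(Q) = h_{inv,int}(Q)$. Each direction proceeds by manufacturing the object appearing in one definition from the object appearing in the other, and then exploiting the ``infimum over iterates'' characterisations already established in Propositions \ref{limex1} and \ref{limex2}.

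For $P_{fb}(f,Q) \leq P_{int}(f,Q)$, I would start from a finite strongly $(n,Q)$-spanning set $\mathcal{S}$ and build a ``one-shot'' invariant open cover $\mathcal{C}_{\mathcal{S}}$ with $\tau = n$. For each $\omega \in \mathcal{S}$ set
\[
A(\omega) := \{x \in Q : \varphi(j,x,\omega) \in \mathrm{int}Q \text{ for all } j = 1,\ldots,n\},
\]
which is open in $Q$ by continuity of each $\varphi_{j,\omega}$, and the spanning property of $\mathcal{S}$ makes $\{A(\omega) : \omega \in \mathcal{S}\}$ a finite open cover of $Q$. Taking $G(A(\omega))$ to be the first $n$ entries of $\omega$ gives the required invariant open cover. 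At block-length $m=1$ the family $\mathcal{B}_1(\mathcal{C}_{\mathcal{S}})$ coincides with $\{A(\omega)\}$, and any minimal subcover selects a generating set of feedback controls whose elements begin with the first $n$ entries of some $\omega \in \mathcal{S}$; this gives $q_1(f,Q,\mathcal{C}_{\mathcal{S}}) \leq \sum_{\omega \in \mathcal{S}} e^{(S_n f)(\omega)}$. Combined with the bound $P_{fb}(f,Q,\mathcal{C}_{\mathcal{S}}) \leq \frac{1}{n}\log q_1(f,Q,\mathcal{C}_{\mathcal{S}})$ coming from Lemma \ref{limex2} applied at $m=1$, infimising over $\mathcal{S}$ and letting $n\to\infty$ delivers the inequality.

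For the reverse $P_{int}(f,Q) \leq P_{fb}(f,Q)$, I would fix an invariant open cover $\mathcal{C} = (\mathcal{A},\tau,G)$ together with a generating set $\mathcal{W}_n$ for it and use the controls in $\mathcal{W}_n$, restricted to their first $n\tau$ entries, as a candidate strongly $(n\tau,Q)$-spanning set. For $\omega = \omega(\alpha_i) \in \mathcal{W}_n$ and $x \in B_n(\alpha_i)$ the cocycle identity together with the defining property of the cover yields $\varphi(j,x,\omega) \in Q$ for all $j=1,\ldots,n\tau$; once these orbits are known to lie in $\mathrm{int}Q$, we obtain $a_{n\tau}(f,Q) \leq q_n(f,Q,\mathcal{C})$, and dividing by $n\tau$, letting $n\to\infty$ and infimising over $\mathcal{C}$ completes the proof. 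The main obstacle is precisely the gap between the cover's condition $\varphi(k,A,G(A)) \subset Q$ and the spanning condition $\varphi(j,x,\omega) \in \mathrm{int}Q$, since intermediate orbit points could touch $\partial Q$. I would close this gap by invoking strong invariance of $Q$ together with compactness --- in the same spirit as the argument used in the proposition on finite spanning sets --- to replace $\mathcal{C}$ by a refined invariant open cover whose images lie strictly in $\mathrm{int}Q$, at a multiplicative cost bounded uniformly in $n$ and therefore absorbed by $\lim_n \frac{1}{n\tau}\log$. This bridging refinement, which relies essentially on the strong-invariance hypothesis, is the most delicate step of the argument; the remainder reduces to subadditivity and to the generating-set constructions already in place.
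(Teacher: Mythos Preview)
Your strategy matches the paper's: both inequalities are obtained by converting strongly spanning sets into invariant open covers and vice versa. In the direction $P_{fb}\le P_{int}$ there is only a minor technical difference: you evaluate the cover $\mathcal{C}_{\mathcal S}$ at block-length $m=1$ and invoke the $\inf_m$ characterisation of Lemma~\ref{limex2}, whereas the paper bounds $q_n(f,Q,\mathcal{C})\le[a_\tau(f,Q)]^n$ for every $n$; both routes arrive at $P_{fb}(f,Q,\mathcal{C}_{\mathcal S})\le\frac{1}{\tau}\log a_\tau(f,Q)$ and then infimise over $\tau$ via Proposition~\ref{limex1}.

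The more substantive divergence is in the direction $P_{int}\le P_{fb}$. You rightly flag that the defining condition $\varphi(k,A,G(A))\subset Q$ for an invariant open cover places orbits only in $Q$, not in $\mathrm{int}Q$, so that a generating set $\mathcal{W}_n$ is a priori merely $(n\tau,Q)$-spanning rather than \emph{strongly} $(n\tau,Q)$-spanning. The paper does not engage with this point at all: it simply asserts that ``every generating set $\mathcal{W}_n$ of controls for $\mathcal{C}$ is a strongly $(n\tau,Q)$-spanning set'' and proceeds directly to $a_{n\tau}(f,Q)\le q_n(f,Q,\mathcal{C})$. Your proposed remedy --- replacing $\mathcal{C}$ by a refined cover whose images land in $\mathrm{int}Q$, at a multiplicative cost uniform in $n$ --- is a reasonable programme but remains a sketch in your proposal; you should be aware, however, that no such refinement step appears in the paper, so on this issue you are being more scrupulous than the source rather than overlooking an argument that is actually present there.
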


\begin{proof}
First we prove the inequality $P_{int}(f,Q)\leq P_{fb}(f,Q)$. Let
$\mathcal{C}=(\mathcal{A},\tau,G)$ be an invariant open cover. Then for
$n\in\mathbb{N}$, every generating set $\mathcal{W}_{n}$ of controls for
$\mathcal{C}$ is a strongly $(n\tau,Q)$-spanning set and hence
\[
a_{n\tau}(f,Q)=\inf_{\mathcal{S}}\sum_{\omega\in\mathcal{S}}e^{(S_{n\tau
}f)(\omega)}\leq\sum_{\omega\in\mathcal{W}_{n}}e^{(S_{n\tau}f)(\omega)},
\]
where the infimum is taken over all strongly $(n\tau,Q)$-spanning set
$\mathcal{S}$. It follows that $a_{n\tau}(f,Q)\leq q_{n}(f,Q,\mathcal{C})$ and
therefore%
\[
P_{int}(f,Q)=\lim_{n\rightarrow\infty}\frac{1}{n\tau}\log a_{n\tau}%
(f,Q)\leq\lim_{n\rightarrow\infty}\frac{1}{n\tau}\log q_{n}(f,Q,\mathcal{C}%
)=P_{fb}(f,Q,\mathcal{C}).
\]
Since this holds for every invariant open cover $\mathcal{C}$, we conclude%
\[
P_{int}(f,Q)\leq\inf_{\mathcal{C}}P_{fb}(f,Q,\mathcal{C})=P_{fb}(f,Q),
\]
where the infimum is taken over all invariant open covers $\mathcal{C}$ of $Q$.

To show that $P_{fb}(f,Q)\leq P_{int}(f,Q)$ we construct an invariant open
cover for $\tau\in\mathbb{N}$. Let $\mathcal{S}$ be a strongly $(\tau
,Q)$-spanning set. For each $\omega\in\mathcal{S}$ consider
\[
A(\omega):=\{x\in Q;\ \varphi(j,x,\omega)\in\mathrm{int}%
Q\mbox{ for }j=1,\ldots,\tau\}.
\]
The set $\mathcal{A}=\{A(\omega);\ \omega\in\mathcal{S}\}$ forms a finite open
cover of $Q$. Now define a map $G:\mathcal{A}\rightarrow U^{\tau}$ by
\[
G(A(\omega))=(\omega_{0},\ldots,\omega_{\tau-1}).
\]
Clearly, $\mathcal{C}:=(\mathcal{A},\tau,G)$ is an invariant open cover of $Q$.

Recall that $\alpha\in\mathcal{A}^{\mathbb{N}_{0}}$ defines a control
$\omega(\alpha)$ and for $n\in\mathbb{N}$ the set $B_{n}(\alpha)$ is given by
(\ref{B_n}),%
\[
B_{n}(\alpha):=\{x\in X;\ \varphi(i\tau,x,\omega(\alpha))\in A_{i}\text{ for
}i=0,1,\ldots,n-1\}.
\]
These sets form on open cover $\mathcal{B}_{n}=\mathcal{B}_{n}(\mathcal{C})$
of $Q$. Consider a generating set of feedback controls of the form%
\[
\mathcal{W}_{n}=\{\omega(\alpha_{i});\alpha_{i}\in\mathcal{A}^{\mathbb{N}_{0}%
}\text{ for }i\in I\},
\]
hence the sets $B_{n}(\alpha_{i}),i\in I$, form a subcover of $\mathcal{B}%
_{n}(\mathcal{C})$ which is minimal. Therefore
\begin{align*}
\sum_{\omega\in\mathcal{W}_{n}}e^{(S_{n\tau}f)(\omega)}  &  =\sum_{\omega
\in\mathcal{B}_{n}}e^{(S_{\tau}f)(\omega)}e^{(S_{\tau}f)(\theta^{\tau}\omega
)}\cdots e^{(S_{\tau}f)(\theta^{(n-1)\tau}\omega)}\\
&  \leq\left(  \sum_{\omega\in\mathcal{B}_{n}}e^{(S_{\tau}f)(\omega)}\right)
\left(  \sum_{\omega\in\mathcal{B}_{n}}e^{(S_{\tau}f)(\theta^{\tau}\omega
)}\right)  \cdots\left(  \sum_{\omega\in\mathcal{B}_{n}}e^{(S_{\tau}%
f)(\theta^{(n-1)\tau}\omega)}\right) \\
&  \leq\left(  \sum_{\omega\in\mathcal{S}}e^{(S_{\tau}f)(\omega)}\right)
^{n}.
\end{align*}
Since the previous inequality holds for all finite strongly $(\tau
,Q)$-spanning sets $\mathcal{S}$, it follows that $q_{n}(f,Q,\mathcal{C}%
)\leq\left[  a_{\tau}(f,Q)\right]  ^{n}$ for all $n\in\mathbb{N}$. Hence
\begin{align*}
P_{fb}(f,Q,\mathcal{C})  &  =\displaystyle\lim_{n\rightarrow\infty}\frac
{1}{n\tau}\log q_{n}(f,Q,\mathcal{C})\leq\displaystyle\lim_{n\rightarrow
\infty}\frac{1}{n\tau}\log\left[  a_{\tau}(f,Q)\right]  ^{n}\\
&  =\frac{1}{\tau}\log a_{\tau}(f,Q).
\end{align*}
Using Proposition \ref{limex1} we conclude that
\[
P_{fb}(f,Q)=\inf_{\mathcal{C}}P_{fb}(f,Q,\mathcal{C})\leq\inf_{\tau
\in\mathbb{N}}\frac{1}{\tau}\log a_{\tau}(f,Q)=P_{int}(f,Q).
\]
\newline
\end{proof}

\section{Properties of the invariance pressure\label{Section3}}

In this section, we collect several properties of invariance pressure which
are analogous to properties of topological pressure for dynamical systems.
Furthermore, we discuss some alternative versions of invariance pressure.

We start with the following technical lemma which will be used in the proof of
Proposition \ref{propr1}.

\begin{lemma}
\label{Lemma_ele}Let $a_{i}\geq0,b_{i}>0,i=1,..,n\in\mathbb{N}$, be real
numbers. Then%
\[
\frac{\sum_{i=1}^{n}a_{i}}{\sum_{i=1}^{n}b_{i}}\geq\min_{i=1,\ldots,n}\left(
\frac{a_{i}}{b_{i}}\right)  .
\]

\end{lemma}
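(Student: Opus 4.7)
The plan is to prove this by a direct reduction to the defining inequality for the minimum. Since all $b_i > 0$, every ratio $a_i/b_i$ is well defined, and the minimum $m := \min_{i=1,\ldots,n}(a_i/b_i)$ exists and is finite.

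First I would observe that by definition of $m$, for every index $i$ we have $a_i / b_i \geq m$, and since $b_i > 0$ this is equivalent to the multiplicative form $a_i \geq m \, b_i$. Summing these $n$ inequalities over $i = 1, \ldots, n$ yields
\[
\sum_{i=1}^{n} a_i \;\geq\; m \sum_{i=1}^{n} b_i.
\]
Since $\sum_{i=1}^{n} b_i > 0$ (each $b_i > 0$), I can divide both sides by $\sum_{i=1}^{n} b_i$ to obtain
\[
\frac{\sum_{i=1}^{n} a_i}{\sum_{i=1}^{n} b_i} \;\geq\; m \;=\; \min_{i=1,\ldots,n}\!\left(\frac{a_i}{b_i}\right),
\]
which is the desired inequality.

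There is no real obstacle here: the only point requiring any care is that the strict positivity of the $b_i$'s is used twice, namely to keep the direction of the inequality when clearing denominators and to ensure the denominator $\sum b_i$ on the left-hand side is nonzero. The hypothesis $a_i \geq 0$ is not strictly needed for the argument itself but guarantees that the left-hand ratio is a nonnegative quantity, consistent with the statement.
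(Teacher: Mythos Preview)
Your proof is correct and in fact cleaner than the paper's. The paper proceeds by induction on $n$: for $n=2$ it normalizes to $b_1=1$, reduces the claim to $a_2 \geq a_1 b_2$, and then handles the induction step by grouping the first $n$ terms against the $(n+1)$st and applying the $n=2$ case. Your argument bypasses induction entirely by setting $m=\min_i(a_i/b_i)$, converting each $a_i/b_i\geq m$ into $a_i\geq m b_i$, summing, and dividing by $\sum b_i>0$. This is shorter and makes transparent exactly where the hypothesis $b_i>0$ is used; your closing remark that $a_i\geq 0$ is not actually needed for the inequality is also correct.
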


\begin{proof}
Let $n=2$. Then we may assume that $\frac{a_{1}}{b_{1}}\leq\frac{a_{2}}{b_{2}%
}$. Dividing numerator and denominator by $b_{1}$ one can further assume that
$b_{1}=1$, hence the assumption takes the form $a_{1}\leq\frac{a_{2}}{b_{2}}$
and the assertion reduces to $\frac{a_{1}+a_{2}}{1+b_{2}}\geq a_{1}$. This is
equivalent to
\[
a_{1}+a_{2}\geq a_{1}+a_{1}b_{2},\text{ i.e., }a_{2}\geq a_{1}b_{2},
\]
which is our assumption. The induction step from $n$ to $n+1$ follows since%
\[
\frac{\sum_{i=1}^{n+1}a_{i}}{\sum_{i=1}^{n+1}b_{i}}=\frac{\sum_{i=1}^{n}%
a_{i}+a_{n+1}}{\sum_{i=1}^{n}b_{i}+b_{n+1}}\geq\min\left(  \frac{\sum
_{i=1}^{n}a_{i}}{\sum_{i=1}^{n}b_{i}},\frac{a_{n+1}}{b_{n+1}}\right)  \geq
\min_{i=1,\ldots,n+1}\left(  \frac{a_{i}}{b_{i}}\right)  .
\]

\end{proof}

\begin{proposition}
\label{propr1}Consider a discrete-time control system of the form (\ref{2.1}),
let $Q$ be a compact strongly invariant subset and let $f,g\in C(U,\mathbb{R}%
)$ and $c\in\mathbb{R}$. Then the following assertions hold:

(i) if $f\leq g$, then $P_{int}(f,Q)\leq P_{int}(g,Q)$.

(ii) $P_{int}(f+c,Q)=P_{int}(f,Q)+c$.

(iii) If $U$ is compact, then $|P_{int}(f,Q)-P_{int}(g,Q)|\leq\Vert
f-g\Vert_{\infty}$.
\end{proposition}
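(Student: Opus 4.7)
The plan is to prove each inequality at the level of the pre-limit quantities $a_n(f,Q)$, then pass to $\frac{1}{n}\log$ and use that the defining limit exists (Proposition \ref{limex1}). All three assertions reduce to elementary manipulations of the exponential weights attached to a common spanning set.

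For (i), if $f\leq g$ pointwise on $U$, then for every $\omega=(u_i)\in\mathcal{U}$ and every $n$, $(S_n f)(\omega)\leq (S_n g)(\omega)$, so $e^{(S_n f)(\omega)}\leq e^{(S_n g)(\omega)}$. Summing over any strongly $(n,Q)$-spanning set $\mathcal{S}$ preserves the inequality, and taking the infimum yields $a_n(f,Q)\leq a_n(g,Q)$. Applying $\frac{1}{n}\log$ and letting $n\to\infty$ gives the assertion. For (ii), note that $(S_n(f+c))(\omega)=(S_n f)(\omega)+nc$, so $e^{(S_n(f+c))(\omega)}=e^{nc}e^{(S_n f)(\omega)}$; the constant $e^{nc}$ factors out of the sum over any spanning set $\mathcal{S}$, giving $a_n(f+c,Q)=e^{nc}a_n(f,Q)$. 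Then $\frac{1}{n}\log a_n(f+c,Q)=c+\frac{1}{n}\log a_n(f,Q)$ and passage to the limit yields (ii).

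For (iii), I would invoke Lemma \ref{Lemma_ele} — this is the step that explains why that lemma appears. Compactness of $U$ ensures $\|f-g\|_\infty<\infty$. Fix $n$ and a finite strongly $(n,Q)$-spanning set $\mathcal{S}$. Applying Lemma \ref{Lemma_ele} to $a_\omega:=e^{(S_n f)(\omega)}$ and $b_\omega:=e^{(S_n g)(\omega)}$ (indexed by $\omega\in\mathcal{S}$) gives
\[
\frac{\sum_{\omega\in\mathcal{S}}e^{(S_n f)(\omega)}}{\sum_{\omega\in\mathcal{S}}e^{(S_n g)(\omega)}}\geq \min_{\omega\in\mathcal{S}} e^{(S_n(f-g))(\omega)}\geq e^{-n\|f-g\|_\infty}.
\]
Hence $\sum_{\omega\in\mathcal{S}}e^{(S_n f)(\omega)}\geq e^{-n\|f-g\|_\infty}\sum_{\omega\in\mathcal{S}}e^{(S_n g)(\omega)}\geq e^{-n\|f-g\|_\infty}a_n(g,Q)$. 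Taking the infimum over $\mathcal{S}$ yields $a_n(f,Q)\geq e^{-n\|f-g\|_\infty}a_n(g,Q)$, and thus, after $\frac{1}{n}\log$ and $n\to\infty$, $P_{int}(f,Q)\geq P_{int}(g,Q)-\|f-g\|_\infty$. Interchanging the roles of $f$ and $g$ furnishes the matching bound and gives (iii).

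I do not expect a genuine obstacle. The only point that requires care is in (iii): the lemma must be applied to a single common spanning set $\mathcal{S}$ (so that the numerator and denominator ratios make sense termwise), and only then may one take the infimum over $\mathcal{S}$. The infimum step is what converts the fixed-$\mathcal{S}$ inequality into a statement about $a_n(g,Q)$, and it works only in the direction shown — which is exactly why one needs the symmetric argument at the end.
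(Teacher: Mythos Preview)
Your proof is correct and follows essentially the same approach as the paper: parts (i) and (ii) are identical, and part (iii) uses Lemma \ref{Lemma_ele} in the same way to compare the weighted sums over a common spanning set before passing to the infimum. Your organization of (iii)---fixing $\mathcal{S}$, bounding the sum below by $e^{-n\|f-g\|_\infty}a_n(g,Q)$, and only then taking the infimum---is in fact slightly cleaner than the paper's, which writes the ratio of infima directly and invokes the (unstated but true) inequality $\inf A/\inf B\geq \inf(A/B)$.
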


\begin{proof}
(i) If $f\leq g$, it follows that $\sum_{\omega\in\mathcal{S}}e^{(S_{n}%
f)(\omega)}\leq\sum_{\omega\in\mathcal{S}}e^{(S_{n}g)(\omega)}$ for all
$(n,Q)$-spanning sets $\mathcal{S}$, because the exponential function is
increasing. Hence $a_{n}(f,Q)\leq a_{n}(g,Q)$ and so $P_{int}(f,Q)\leq
P_{int}(g,Q)$.

(ii) One finds that
\begin{align*}
a_{n}(f+c,Q)  &  =\inf\left\{  \sum_{\omega\in\mathcal{S}}e^{(S_{n}%
(f+c))(\omega)};\ \mathcal{S}\text{ }(n,Q)\text{-spanning}\right\} \\
&  =\inf\left\{  e^{nc}\sum_{\omega\in\mathcal{S}}e^{(S_{n}f)(\omega
)};\ \mathcal{S}\text{ }(n,Q)\text{-spanning}\right\} \\
&  =e^{nc}a_{n}(f,Q),
\end{align*}
hence
\begin{align*}
P_{int}(f+c,Q)  &  =\lim_{n\rightarrow\infty}\frac{1}{n}\log a_{n}%
(f+c,Q)=\lim_{n\rightarrow\infty}\frac{1}{n}\log\left(  e^{nc}a_{n}%
(f,Q)\right) \\
&  =c+P_{int}(f,Q).
\end{align*}

(iii) Recall that for $a_{n}(f,Q)$ and $a_{n}(g,Q)$ the infimum is taken over
all strongly $(n,Q)$-spanning sets $\mathcal{S}$. Thus, using Lemma
\ref{Lemma_ele} for the second inequality below, one finds%
\begin{align*}
\frac{a_{n}(g,Q)}{a_{n}(f,Q)}  &  =\frac{\inf_{\mathcal{S}}\left\{
\sum_{\omega\in\mathcal{S}}e^{(S_{n}g)(\omega)}\right\}  }{\inf_{\mathcal{S}%
}\left\{  \sum_{\omega\in\mathcal{S}}e^{(S_{n}f)(\omega)}\right\}  }\geq
\inf_{\mathcal{S}}\left\{  \frac{\sum_{\omega\in\mathcal{S}}e^{(S_{n}%
g)(\omega)}}{\sum_{\omega\in\mathcal{S}}e^{(S_{n}f)(\omega)}}\right\} \\
&  \geq\inf_{\mathcal{S}}\left\{  \min_{\omega\in\mathcal{S}}\frac
{e^{(S_{n}g)(\omega)}}{e^{(S_{n}f)(\omega)}}\right\}  \geq e^{-n\Vert
f-g\Vert_{\infty}}.
\end{align*}
Therefore $\frac{a_{n}(f,Q)}{a_{n}(g,Q)}\leq e^{n\Vert f-g\Vert_{\infty}}$ and
so
\begin{align*}
P_{int}(f,Q)-P_{int}(g,Q)  &  =\lim_{n\rightarrow\infty}\frac{1}{n}\log
\frac{a_{n}(f,Q)}{a_{n}(g,Q)}\leq\lim_{n\rightarrow\infty}\frac{1}{n}\log
e^{n\Vert f-g\Vert_{\infty}}\\
&  =\Vert f-g\Vert_{\infty}.
\end{align*}
Interchanging the roles of $f$ and $g$ one finds assertion (iii).
\end{proof}

Next we discuss changes in the considered set $Q$.

\begin{proposition}
Let $f\in C(U,\mathbb{R})$ and $Q\subset X$ a compact strongly invariant set.
Assume that $Q=\bigcup\nolimits_{i=1}^{N}Q_{i}$ with compact strongly
invariant sets $Q_{1},\ldots,Q_{N}$. Then
\[
P_{int}(f,Q)\leq\max_{1\leq i\leq N}P_{int}(f,Q_{i}).
\]

\end{proposition}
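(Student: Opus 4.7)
The plan is to build a strongly $(n,Q)$-spanning set by combining strongly $(n,Q_{i})$-spanning sets, one for each piece of the decomposition.

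First, I would fix $n\in\mathbb{N}$ and, for each $i=1,\ldots,N$, choose a finite strongly $(n,Q_{i})$-spanning set $\mathcal{S}_{i}\subset\mathcal{U}$. The key observation is that the union
\[
\mathcal{S}:=\bigcup_{i=1}^{N}\mathcal{S}_{i}
\]
is strongly $(n,Q)$-spanning. Indeed, given $x\in Q$, there is some index $i$ with $x\in Q_{i}$, and then by the strong $(n,Q_{i})$-spanning property there exists $\omega\in\mathcal{S}_{i}\subset\mathcal{S}$ such that $\varphi(j,x,\omega)\in\mathrm{int}\,Q_{i}$ for $j=1,\ldots,n$. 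Since $Q_{i}\subset Q$ implies $\mathrm{int}\,Q_{i}\subset\mathrm{int}\,Q$, this gives $\varphi(j,x,\omega)\in\mathrm{int}\,Q$ as required.

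Next, using that the $\mathcal{S}_{i}$ may overlap but removing duplicates only decreases the sum, I would estimate
\[
a_{n}(f,Q)\leq\sum_{\omega\in\mathcal{S}}e^{(S_{n}f)(\omega)}\leq\sum_{i=1}^{N}\sum_{\omega\in\mathcal{S}_{i}}e^{(S_{n}f)(\omega)}.
\]
Taking the infimum over each $\mathcal{S}_{i}$ separately yields
\[
a_{n}(f,Q)\leq\sum_{i=1}^{N}a_{n}(f,Q_{i})\leq N\max_{1\leq i\leq N}a_{n}(f,Q_{i}).
\]

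The final step is to take $\frac{1}{n}\log$ and pass to the limit. The factor $N$ contributes $\frac{\log N}{n}\to 0$, and since $\log\max_{i}=\max_{i}\log$, one has
\[
\frac{1}{n}\log\max_{1\leq i\leq N}a_{n}(f,Q_{i})=\max_{1\leq i\leq N}\frac{1}{n}\log a_{n}(f,Q_{i}).
\]
For a finite collection of convergent sequences the maximum converges to the maximum of the limits, so this quantity tends to $\max_{1\leq i\leq N}P_{int}(f,Q_{i})$, yielding the desired inequality.

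I do not expect a serious obstacle here; the only points that require a moment's care are the inclusion $\mathrm{int}\,Q_{i}\subset\mathrm{int}\,Q$ (which is immediate since $Q_{i}\subset Q$) and the commutation of $\max$ with the limit for the finite index set $\{1,\ldots,N\}$.
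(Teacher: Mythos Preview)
Your proof is correct and follows essentially the same approach as the paper: form the union $\mathcal{S}=\bigcup_{i}\mathcal{S}_{i}$, observe it is strongly $(n,Q)$-spanning, and deduce $a_{n}(f,Q)\leq\sum_{i}a_{n}(f,Q_{i})$. The only cosmetic difference is in the final passage to the limit: the paper invokes Kawan \cite[Lemma~2.1]{Kawa13} to pass directly from $\limsup\frac{1}{n}\log\sum_{i}a_{n}(f,Q_{i})$ to $\max_{i}\limsup\frac{1}{n}\log a_{n}(f,Q_{i})$, whereas you insert the intermediate bound $\sum_{i}\leq N\max_{i}$ and handle the limit by hand.
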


\begin{proof}
For every $i\in\{1,\ldots,N\}$, let $\mathcal{S}_{i}$ a strongly $(n,Q_{i}%
)$-spanning set and define $\mathcal{S}=\bigcup_{i=1}^{N}\mathcal{S}_{i}$.
Then $\mathcal{S}$ is a strongly $(n,Q)$-spanning set with%
\[
\sum_{\omega\in\mathcal{S}}e^{(S_{n}f)(\omega)}\leq\sum_{i=1}^{N}\sum
_{\omega\in\mathcal{S}_{i}}e^{(S_{n}f)(\omega)}.
\]
With
\[
a_{n}(f,Q_{i})=\inf\left\{  \sum_{\omega\in\mathcal{S}_{i}}e^{(S_{n}%
f)(x,\omega)};\ \mathcal{S}_{i}\text{ strongly }(n,Q_{i})\text{-spanning}%
\right\}  ,
\]
we have $a_{n}(f,Q)\leq\sum_{i=1}^{N}a_{n}(f,Q_{i})$. Now Kawan \cite[Lemma
2.1]{Kawa13} implies that%
\begin{align*}
P_{int}(f,Q)  &  =\lim_{n\rightarrow\infty}\frac{1}{n}\log a_{n}%
(f,Q)\leq\limsup_{n\rightarrow\infty}\frac{1}{n}\log\sum_{i=1}^{N}%
a_{n}(f,Q_{i})\\
&  \leq\max_{1\leq i\leq N}\limsup_{n\rightarrow\infty}\frac{1}{n}\log
a_{n}(f,Q_{i})\\
&  =\max_{1\leq i\leq N}P_{int}(f,Q_{i}).
\end{align*}

\end{proof}

Consider two control systems of the form (\ref{2.1}) given by
\begin{equation}
x_{k+1}=F_{1}(x_{k},u_{k})\text{ and }y_{k+1}=F_{2}(y_{k},v_{k}) \label{two}%
\end{equation}
in $X_{1}$ and $X_{2}$ with corresponding solutions $\varphi_{1}%
(n,x,\omega_{1})$ and $\varphi_{2}(n,y,\omega_{2})$ and control spaces
$\mathcal{U}_{1}$ and $\mathcal{U}_{2}$ corresponding to control ranges
$U_{1}$ and $U_{2}$, respectively. Then
\[
z_{k+1}=F(z_{k},w_{k}),
\]
with $z_{k}=(x_{k},y_{k})$, $w_{k}=(u_{k},v_{k})$, $F=(F_{1},F_{2})$, again is
a control system of the form (\ref{2.1}) in $X_{1}\times X_{2}$ with control
space $\mathcal{U}_{1}\times\mathcal{U}_{2}$ and solution $\varphi_{1}%
\times\varphi_{2}:\mathbb{N}_{0}\times(X_{1}\times X_{2})\times(\mathcal{U}%
_{1}\times\mathcal{U}_{2})$,
\[
\left(  \varphi_{1}\times\varphi_{2}\right)  (n,z,\omega)=\left(  \varphi
_{1}\times\varphi_{2}\right)  (n,(x,y),(\omega_{1},\omega_{2}))=(\varphi
_{1}(n,x,\omega_{1}),\varphi_{2}(n,y,\omega_{2})).
\]

\begin{proposition}
Let $f_{i}\in C(U_{i},\mathbb{R})$ and let $Q_{i}\subset X_{i}$ be compact
strongly invariant sets for the control systems in (\ref{two}), $i=1,2$. Then%
\[
P_{int}(f_{1}\times f_{2},Q_{1}\times Q_{2})=P_{int}(f_{1},Q_{1}%
)+P_{int}(f_{2},Q_{2}),
\]
where $f_{1}\times f_{2}\in C(U_{1}\times U_{2},\mathbb{R})$ is defined by
$(f_{1}\times f_{2})(u,v)=f_{1}(u)+f_{2}(v)$.
\end{proposition}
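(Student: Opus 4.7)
The plan is to prove the two inequalities separately. For the easy direction $P_{int}(f_1\times f_2, Q_1\times Q_2) \leq P_{int}(f_1, Q_1) + P_{int}(f_2, Q_2)$, I fix $n\in\mathbb{N}$ and take finite strongly $(n, Q_i)$-spanning sets $\mathcal{S}_i\subset\mathcal{U}_i$. The Cartesian product $\mathcal{S}:=\mathcal{S}_1\times\mathcal{S}_2$, viewed as a subset of $\mathcal{U}_1\times\mathcal{U}_2$, is strongly $(n, Q_1\times Q_2)$-spanning: for any $(x,y)\in Q_1\times Q_2$, choose $\omega_i\in\mathcal{S}_i$ handling the respective coordinate and observe that the product cocycle decouples, so that $\varphi(k,(x,y),(\omega_1,\omega_2))=(\varphi_1(k,x,\omega_1),\varphi_2(k,y,\omega_2))\in\mathrm{int}(Q_1)\times\mathrm{int}(Q_2)=\mathrm{int}(Q_1\times Q_2)$ for $k=1,\ldots,n$. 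Since $S_n(f_1\times f_2)(\omega_1,\omega_2)=S_nf_1(\omega_1)+S_nf_2(\omega_2)$, the weighted sum factorizes as
$$\sum_{(\omega_1,\omega_2)\in\mathcal{S}}e^{S_n(f_1\times f_2)(\omega_1,\omega_2)}=\Bigl(\sum_{\omega_1\in\mathcal{S}_1}e^{S_nf_1(\omega_1)}\Bigr)\Bigl(\sum_{\omega_2\in\mathcal{S}_2}e^{S_nf_2(\omega_2)}\Bigr),$$
and taking infima on both sides followed by $(1/n)\log$ and $n\to\infty$ gives the inequality.

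For the converse I would start from an arbitrary strongly $(n, Q_1\times Q_2)$-spanning set $\mathcal{S}\subset\mathcal{U}_1\times\mathcal{U}_2$ and consider the coordinate projections $\pi_i(\mathcal{S})$. Applying the spanning property of $\mathcal{S}$ to points of the form $(x,y_0)$ for a fixed $y_0\in Q_2$ (and symmetrically for the other factor) shows that each $\pi_i(\mathcal{S})$ is strongly $(n, Q_i)$-spanning, which yields the lower bounds $\sum_{\omega_i\in\pi_i(\mathcal{S})} e^{S_nf_i(\omega_i)}\geq a_n(f_i, Q_i)$. The goal is then to turn these factor-wise lower bounds into a lower bound on the joint sum $\sum_{(\omega_1,\omega_2)\in\mathcal{S}}e^{S_nf_1(\omega_1)+S_nf_2(\omega_2)}$ of order $a_n(f_1,Q_1)\,a_n(f_2,Q_2)$.

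The main obstacle is precisely this last step: one has only the inclusion $\mathcal{S}\subset\pi_1(\mathcal{S})\times\pi_2(\mathcal{S})$, so the product of the projection sums \emph{dominates} the joint sum over $\mathcal{S}$ rather than being dominated by it, going in the wrong direction. In the classical topological pressure proof of the product formula this step is handled by invoking $(n,\varepsilon)$-separated sets, which behave multiplicatively under products; as remarked in the introduction, no such notion is available for control systems. A plausible way around this is to pass to the feedback formulation via Theorem \ref{teo8}: take invariant open covers $\mathcal{C}_i=(\mathcal{A}_i,\tau,G_i)$ of $Q_i$ with a common $\tau$ (obtained by rescaling and iterating feedbacks), form the product cover with sets $A_1\times A_2$ and feedback $(G_1(A_1),G_2(A_2))$, and exploit the factorization $B_n^{(1\times 2)}(\alpha_1,\alpha_2)=B_n^{(1)}(\alpha_1)\times B_n^{(2)}(\alpha_2)$ to check that any generating set of feedback controls for the product cover restricts to generating sets on each factor. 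A careful comparison of the resulting weighted subsums, assisted by Lemma \ref{Lemma_ele}, should supply the missing multiplicativity, and Theorem \ref{teo8} then transfers the estimate back to $P_{int}$.
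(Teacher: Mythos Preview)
Your argument for the inequality $P_{int}(f_1\times f_2,Q_1\times Q_2)\leq P_{int}(f_1,Q_1)+P_{int}(f_2,Q_2)$ is exactly the paper's: product spanning sets together with the factorisation of the exponential sum. You are also right to single out the converse as the delicate step. In fact the paper's own proof does \emph{not} supply this step either: after the factorisation it simply declares
\[
a_n(f_1\times f_2,Q_1\times Q_2)=a_n(f_1,Q_1)\,a_n(f_2,Q_2),
\]
whereas the argument given (varying $\mathcal S_1,\mathcal S_2$ over product spanning sets) establishes only the inequality $\leq$. So the gap you diagnose is present in the paper as well; your caution here is an improvement, not a defect.

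Your proposed remedy via Theorem~\ref{teo8} is natural but runs into the same structural obstruction. For a product cover $\mathcal C_1\times\mathcal C_2$ one indeed has $B_n(\alpha_1,\alpha_2)=B_n^{(1)}(\alpha_1)\times B_n^{(2)}(\alpha_2)$, but a \emph{minimal} subcover of $Q_1\times Q_2$ by such rectangles need not project to minimal subcovers of the factors, so the weighted sum $q_n(f_1\times f_2,Q_1\times Q_2,\mathcal C_1\times\mathcal C_2)$ is not obviously bounded below by $q_n(f_1,Q_1,\mathcal C_1)\,q_n(f_2,Q_2,\mathcal C_2)$. Lemma~\ref{Lemma_ele} gives a lower bound for a ratio of sums by a minimum of ratios, which goes in the wrong direction for producing a product lower bound. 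Moreover, to bound $P_{fb}$ from below one must control \emph{all} invariant open covers of $Q_1\times Q_2$, not just product covers, and a general cover has no reason to respect the product structure. In the classical theory this is precisely where separated sets enter, and as the introduction notes, no analogue is available here. In short, neither your sketch nor the paper's proof closes the $\geq$ direction; an additional idea or hypothesis appears to be needed.
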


\begin{proof}
Note that $Q_{1}\times Q_{2}\subset$ $X_{1}\times X_{2}$ is a compact strongly
invariant set. Furthermore, if $\mathcal{S}_{i}$ is a strongly $(n,Q_{i}%
)$-spanning set for $Q_{i}$, $i=1,2$, then $\mathcal{S}=\mathcal{S}_{1}%
\times\mathcal{S}_{2}\subset\mathcal{U}_{1}\times\mathcal{U}_{2}$ is a
strongly $(n,Q_{1}\times Q_{2})$-spanning set and%
\begin{align*}
\sum_{\omega\in\mathcal{S}}e^{(S_{n}(f_{1}\times f_{2}))(\omega)}  &
=\sum_{(\omega_{1},\omega_{2})\in\mathcal{S}_{1}\times\mathcal{S}_{2}%
}e^{(S_{n}f_{1}))(\omega_{1})}e^{(S_{n}f_{2}))(\omega_{2})}\\
&  =\sum_{\omega_{1}\in\mathcal{S}_{1}}e^{(S_{n}f_{1}))(\omega_{1})}%
\sum_{\omega_{2}\in\mathcal{S}_{2}}e^{(S_{n}f_{2}))(\omega_{2})}.
\end{align*}
Since $\mathcal{S}_{1}$ and $\mathcal{S}_{2}$ are arbitrary, we obtain%
\[
a_{n}(f_{1}\times f_{2},Q_{1}\times Q_{2})=a_{n}(f_{1},Q_{1})a_{n}(f_{2}%
,Q_{2}).
\]
Therefore%
\begin{align*}
P_{int}(f_{1}\times f_{2},Q_{1}\times Q_{2})  &  =\lim_{n\rightarrow\infty
}\frac{1}{n}\log a_{n}(f_{1}\times f_{2},Q_{1}\times Q_{2})\\
&  =\lim_{n\rightarrow\infty}\frac{1}{n}\log\left[  a_{n}(f_{1},Q_{1}%
)a_{n}(f_{2},Q_{2})\right] \\
&  =P_{int}(f_{1},Q_{1})+P_{int}(f_{2},Q_{2}).
\end{align*}

\end{proof}

Next we show that the inner invariance pressure is invariant under appropriate
conjugacies. Again, consider two control systems as in (\ref{two}). A pair of
maps $(\rho,H)$ is called a skew conjugacy if $\rho:X_{1}\rightarrow X_{2}$
and $H:U_{1}\rightarrow U_{2}$ are homeomorphisms such that%
\begin{equation}
\rho(F_{1}(x,u))=F_{2}(\rho(x),H(u))~\text{for all }x\in X_{1},u\in U_{1}.
\label{conj1}%
\end{equation}
Note that this induces a map $h:\mathcal{U}_{1}\rightarrow\mathcal{U}_{2}$
such that $h(\omega)_{i}=H(\omega_{i})$ for all $i\in\mathbb{N}_{0}$ and the
solutions satisfy%
\begin{equation}
\rho(\varphi_{1}(k,x,\omega))=\varphi_{2}(k,\rho(x),h(\omega))\text{ for all
}n\in\mathbb{N}_{0}. \label{conj2}%
\end{equation}
Clearly, skew conjugacy is an equivalence relation.

\begin{theorem}
Using the above notation, assume that $(\rho,H)$ is a skew conjugacy between
these two systems, and let $f\in C(U_{2},\mathbb{R})$ and suppose that
$Q\subset X_{1}$ is strongly invariant. Then $\rho(Q)$ is strongly invariant
in $X_{2}$ and the inner invariance pressure satisfies%
\[
P_{int}(f\circ H,Q)=P_{int}(f,\rho(Q)).
\]

\end{theorem}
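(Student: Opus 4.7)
The plan is to transport strongly $(n,Q)$-spanning sets from $\mathcal{U}_1$ to $\mathcal{U}_2$ via the induced bijection $h$, to observe that the weights assigned by $f\circ H$ on one side match those assigned by $f$ on the other, and to conclude the equality of $a_n$.

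First I would check that $\rho(Q)$ is strongly invariant. Since $\rho$ is a homeomorphism, $\rho(Q)$ is compact and $\rho(\mathrm{int}Q)=\mathrm{int}\rho(Q)$. Given $y=\rho(x)\in\rho(Q)$, strong invariance of $Q$ yields $u\in U_1$ with $F_1(x,u)\in\mathrm{int}Q$. Using (\ref{conj1}) we obtain $F_2(y,H(u))=\rho(F_1(x,u))\in\mathrm{int}\rho(Q)$, so $v:=H(u)\in U_2$ witnesses strong invariance.

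Next I would show that $\mathcal{S}\subset\mathcal{U}_1$ is strongly $(n,Q)$-spanning iff $h(\mathcal{S})\subset\mathcal{U}_2$ is strongly $(n,\rho(Q))$-spanning. Since $H$ is a homeomorphism, $h$ is a bijection between $\mathcal{U}_1$ and $\mathcal{U}_2$. For the forward direction, given $y=\rho(x)\in\rho(Q)$ pick $\omega\in\mathcal{S}$ with $\varphi_1(i,x,\omega)\in\mathrm{int}Q$ for $i=1,\dots,n$; then (\ref{conj2}) gives
\[
\varphi_2(i,y,h(\omega))=\rho(\varphi_1(i,x,\omega))\in\rho(\mathrm{int}Q)=\mathrm{int}\rho(Q).
\]
The reverse implication is identical since $(\rho^{-1},H^{-1})$ is also a skew conjugacy by the already noted fact that skew conjugacy is an equivalence relation.

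Finally I would match the exponential sums. Because $(f\circ H)(u_i)=f(H(u_i))=f(h(\omega)_i)$, one has $(S_n(f\circ H))(\omega)=(S_nf)(h(\omega))$, and therefore
\[
\sum_{\omega\in\mathcal{S}}e^{(S_n(f\circ H))(\omega)}=\sum_{\eta\in h(\mathcal{S})}e^{(S_nf)(\eta)}.
\]
Combining this identity with the bijection between the two classes of spanning sets shows $a_n(f\circ H,Q)=a_n(f,\rho(Q))$ for every $n$; taking $\tfrac{1}{n}\log$ and letting $n\to\infty$ yields $P_{int}(f\circ H,Q)=P_{int}(f,\rho(Q))$. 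The argument is essentially bookkeeping, and the only point requiring a little care is verifying that $h$ sends spanning sets to spanning sets, which rests on the homeomorphism property of $\rho$ (so that interiors are preserved) together with the intertwining identity (\ref{conj2}).
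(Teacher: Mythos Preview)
Your proof is correct and follows essentially the same approach as the paper: transport spanning sets via the bijection $h$, use that $\rho$ is a homeomorphism to preserve interiors, match the exponential weights via $(S_n(f\circ H))(\omega)=(S_nf)(h(\omega))$, and conclude $a_n(f\circ H,Q)=a_n(f,\rho(Q))$. The only cosmetic difference is that the paper handles the reverse direction by saying ``the same arguments show,'' whereas you invoke the symmetry of the skew-conjugacy relation explicitly.
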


\begin{proof}
The set $\rho(Q)$ is compact by continuity of $\rho$. In order to see that it
is strongly invariant, write $y=\rho(x)\in\rho(Q)$ with $x\in Q$. By strong
invariance of $Q$ there is $u\in U_{1}$ with $F_{1}(x,u)\in\mathrm{int}Q$.
Since $\rho$ is an open map, the conjugacy condition implies for all
$i\in\{1,\ldots,n\}$.%
\[
F_{2}(y,H(u))=F_{2}(\rho(x),H(u))=\rho(F_{1}(x,u))\in\rho(\mathrm{int}%
Q)=\mathrm{int}(\rho(Q)).
\]
If $\mathcal{S}$ is a strongly $(n,Q)$-spanning set, then $h(\mathcal{S})$ is
a strongly $(n,\rho(Q))$-spanning set: In fact, for $y=\rho(x)\in\rho(Q)$
there is $\omega\in\mathcal{S}$ with $\varphi_{1}(i,x,\omega)\in
\mathrm{int}(Q)$, $i=1,\ldots,n$, therefore (\ref{conj2}) implies%
\[
\varphi_{2}(i,y,h(\omega))=\varphi_{2}(i,\rho(x),h(\omega))=\rho(\varphi
_{1}(i,x,\omega))\in\rho(\mathrm{int}(Q))=\mathrm{int}(\rho(Q)).
\]
The same arguments show that for a strongly $(n,\rho(Q))$-spanning set
$\widetilde{\mathcal{S}}$ the set $\mathcal{S}:=h^{-1}(\widetilde{\mathcal{S}%
})$ is strongly $(n,Q)$-spanning. Note also that $(S_{n}f)(h(\omega
))=(S_{n}(f\circ H))(\omega)$. Hence
\[
\sum_{h(\omega)\in h(\mathcal{S})}e^{(S_{n}f)(h(\omega))}=\sum_{\omega
\in\mathcal{S}}e^{(S_{n}f)(h(\omega))}=\sum_{\omega\in\mathcal{S}}%
e^{(S_{n}(f\circ H))(\omega)}%
\]
and it follows that $a_{n}(f,\rho(Q))=a_{n}(f\circ H,Q)$, and $P_{int}(f\circ
H,Q)=P_{int}(f,\rho(Q))$, as claimed.
\end{proof}

Next we prove the power rule for inner invariance pressure. Consider a control
system of the form (\ref{2.1}) with compact strongly invariant set $Q$.
Suppose we take $N\in\mathbb{N}$ steps at once. Then, naturally, the solution
$\varphi(N,x,\omega)$ may be in $\mathrm{int}Q$ while there may exist
$i\in\{1,...,N-1\}$ with $\varphi(i,x,\omega)\not \in Q$. Hence, for a power
rule in invariance problems of discrete-time systems one has to exclude this a-priori.

Starting from control system (\ref{2.1}) define the following control system.
Given $N\in\mathbb{N}$, the control range is $U^{N}=U\times\ldots\times U$ and
the set of corresponding controls is denoted by $\mathcal{U}^{N}$. Then a
bijective relation between the controls in $\mathcal{U}$ and in $\mathcal{U}%
^{N}$ is given by%
\[
i:\mathcal{U}\rightarrow\mathcal{U}^{N}:\omega=(\omega_{k})\mapsto(\omega
_{k}^{N}):=(\omega(Nk),\ldots,\omega(Nk+N-1)).
\]
The solutions will be given by $\varphi^{N}(0,x,\omega)=x$ and for $k\geq1$%
\[
\varphi^{N}(k,x,i(\omega))=\varphi(nN,x,\omega).
\]
Then, these are the solutions of a control system of the form
\begin{equation}
x_{k+1}=F^{(N)}(x_{k},v_{k}),~v_{k}\in U^{N}, \label{msyst}%
\end{equation}
and the solutions can be written as
\[
\varphi^{N}(k,x,\omega)=\varphi_{N,\theta^{N(k-1)}(\omega)}\circ\cdots
\circ\varphi_{N,\omega}(x).
\]
As argued above, in the definition of the strong invariance pressure of system
(\ref{msyst}) we only consider solutions which remain in $Q$ for all times
between the steps of length $N$.

\begin{proposition}
In the above setting we denote by $P_{inv}^{N}(f,Q)$ the inner invariance
pressure of (\ref{msyst}). Then for every $f\in C(U,\mathbb{R})$%
\[
P_{int}^{N}(g,Q)=N\cdot P_{int}(f,Q),
\]
where $g\in C(U^{N},\mathbb{R})$ is given by $g(\omega_{0},\ldots,\omega
_{N-1}):=\sum_{i=0}^{N-1}f(\omega_{i})$.
\end{proposition}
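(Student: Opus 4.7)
The plan is to exploit the bijection $i:\mathcal{U}\rightarrow\mathcal{U}^{N}$ directly. I first observe that, because the strong invariance pressure of the $N$-step system (\ref{msyst}) is defined so that only trajectories remaining in $Q$ at all intermediate times are admissible, a set $\widetilde{\mathcal{S}}\subset\mathcal{U}^{N}$ is strongly $(n,Q)$-spanning for (\ref{msyst}) if and only if $\mathcal{S}:=i^{-1}(\widetilde{\mathcal{S}})\subset\mathcal{U}$ is strongly $(nN,Q)$-spanning for the original system (\ref{2.1}). This is immediate from the construction of $\varphi^{N}$ together with the admissibility requirement on intermediate steps; the bijectivity of $i$ guarantees the correspondence in both directions.

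Next, I would compute the Birkhoff-type sum for the $N$-step system. Writing $(S_{n}^{(N)}g)(\widetilde{\omega})$ for the sum associated to (\ref{msyst}) and $\widetilde{\omega}=i(\omega)$, a direct expansion using the definition $g(\omega_{0},\ldots,\omega_{N-1})=\sum_{j=0}^{N-1}f(\omega_{j})$ yields
\[
(S_{n}^{(N)}g)(i(\omega))=\sum_{k=0}^{n-1}\sum_{j=0}^{N-1}f(\omega(Nk+j))=\sum_{l=0}^{nN-1}f(\omega_{l})=(S_{nN}f)(\omega).
\]
Summing $e^{(S_{n}^{(N)}g)(\widetilde{\omega})}$ over $\widetilde{\omega}\in\widetilde{\mathcal{S}}$ is therefore identical to summing $e^{(S_{nN}f)(\omega)}$ over $\omega\in\mathcal{S}$. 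Combined with the first step, taking the infimum over strongly spanning sets on each side gives
\[
a_{n}^{(N)}(g,Q)=a_{nN}(f,Q).
\]

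Finally, I pass to the limit. By definition,
\[
P_{int}^{N}(g,Q)=\lim_{n\rightarrow\infty}\frac{1}{n}\log a_{n}^{(N)}(g,Q)=N\cdot\lim_{n\rightarrow\infty}\frac{1}{nN}\log a_{nN}(f,Q).
\]
Since Proposition \ref{limex1} guarantees that $\frac{1}{m}\log a_{m}(f,Q)$ converges as $m\rightarrow\infty$, its subsequence indexed by $m=nN$ converges to the same value $P_{int}(f,Q)$, which yields $P_{int}^{N}(g,Q)=N\cdot P_{int}(f,Q)$.

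The proof is essentially a bookkeeping exercise; the only nontrivial point is the first step, namely verifying that the definition of strong spanning for (\ref{msyst}) really coincides with strong spanning for the original system at time $nN$. This is exactly what the author's parenthetical remark about ``solutions which remain in $Q$ for all times between the steps of length $N$'' is designed to ensure, so once that convention is in force the remainder is a routine change of variable together with an appeal to Proposition \ref{limex1}.
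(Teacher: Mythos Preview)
Your proof is correct and follows essentially the same approach as the paper: establish the bijective correspondence between strongly $(n,Q)$-spanning sets for the $N$-step system and strongly $(nN,Q)$-spanning sets for the original system, check that the weighted sums coincide under this correspondence so that $a_{n}^{(N)}(g,Q)=a_{nN}(f,Q)$, and pass to the limit. Your write-up is in fact slightly more explicit than the paper's, since you spell out the Birkhoff-sum identity $(S_{n}^{(N)}g)(i(\omega))=(S_{nN}f)(\omega)$ and invoke Proposition~\ref{limex1} to justify that the subsequence limit agrees with the full limit.
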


\begin{proof}
If $\mathcal{S}\subset\mathcal{U}$ is a strongly $(nN,Q)$-spanning set for
(\ref{2.1}), then $\mathcal{S}^{N}:=\{i(\omega);\ \omega\in\mathcal{S}\}$ is a
strongly $(n,Q)$-spanning set for (\ref{msyst}). Analogously, if
$\mathcal{S}^{N}$ is a strongly $(n,Q)$-spanning set for (\ref{msyst}), then
$i^{-1}(\mathcal{S}^{N})$ is a strongly $(nN,Q)$-spanning set for (\ref{2.1}).
Therefore
\[
\sum_{\omega\in\mathcal{S}^{N}}e^{(S_{n}g)(\omega)}=\sum_{\omega\in
i^{-1}(\mathcal{S}^{N})}e^{(S_{nN}f)(\omega)}.
\]
We denote
\[
a_{n}^{N}(f,Q):=\inf_{\mathcal{S}^{N}}\left\{  \sum_{\omega\in\mathcal{S}^{N}%
}e^{(S_{n}f)(\omega)}\right\}  ,
\]
where the infimum is taken over all the strongly $(n,Q)$-spanning sets
$\mathcal{S}^{N}$ for (\ref{msyst}). Then $a_{n}^{N}(g,Q)=a_{nN}(f,Q)$ and so
\[
P_{int}^{N}(g,Q)=\lim_{n\rightarrow\infty}\frac{1}{n}\log a_{n}^{N}%
(g,Q)=N\lim_{n\rightarrow\infty}\frac{1}{nN}\log a_{nN}(f,Q)=N\cdot
P_{int}(f,Q).
\]

\end{proof}

The following simple example illustrates inner invariance pressure. A more
elaborate case will be discussed in the next section in the framework of outer
invariance pressure for continuous-time systems.

\begin{example}
\label{Example18}Consider a scalar linear system of the form%
\[
x_{k+1}=ax_{k+1}+u_{k},u_{k}\in U:=[-1,1],
\]
with $a>1$ and let $Q:=\left[  -\frac{1}{a-1}+\varepsilon,\frac{1}%
{a-1}-\varepsilon\right]  $, where $\varepsilon>0$ is small. Let $f\in
C(U,\mathbb{R})$ be given by $f(u)=\left\vert u\right\vert ,u\in\lbrack-1,1]$.
We claim that $P_{int}(f,Q)=\log a=h_{inv,int}(Q)$, where the equality for the
inner invariance entropy of $Q$ has been shown in Colonius, Kawan and Nair
\cite[Example 3.2]{ColoKN13}.

In order to show $P_{int}(f,Q)\geq\log a$, consider for $n\in\mathbb{N}$ a
finite strongly $(n,Q)$-spanning set $\mathcal{S}$. For $\omega\in\mathcal{S}$
define%
\[
Q_{\omega}:=\{x\in Q;\varphi(j,x,\omega)\in\mathrm{int}Q\text{ for }%
j=1,\ldots,n\}.
\]
Then $Q=\bigcup\limits_{\omega\in\mathcal{S}}Q_{\omega}$ and hence the
Lebesgue measure $\lambda$ satisfies $\lambda(Q)\leq\sum\limits_{\omega
\in\mathcal{S}}\lambda(Q_{\omega})$. Furthermore, for $x\in Q_{\omega}$ we
have%
\[
\varphi(n,x,\omega)=a^{n-1}x+\sum_{i=0}^{n-2}a^{i}u_{i}\in Q,
\]
which implies that $\lambda(Q)\geq a^{n}\lambda(Q_{\omega})$. Thus%
\[
\lambda(Q)\leq\sum_{\omega\in\mathcal{S}}\lambda(Q_{\omega})\leq
\#\mathcal{S\cdot}\max_{\omega\in\mathcal{S}}\lambda(Q_{\omega})\leq
\#\mathcal{S}\cdot a^{-(n-1)}\lambda(Q)
\]
and hence $\#\mathcal{S}\geq a^{n-1}$. Since $f(u)\geq0$, it follows that%
\[
a_{n}(f,Q)=\inf\left\{  \sum_{\omega\in\mathcal{S}}e^{(S_{n}f)(\omega
)};\ \mathcal{S}\text{ strongly }(n,Q)\text{-spanning}\right\}  \geq a^{n-1}%
\]
and hence%
\[
P_{int}(f,Q)=\lim_{n\rightarrow\infty}\frac{1}{n}\log a_{n}(f,Q)\geq\log a.
\]
In order to prove $P_{int}(f,Q)\leq\log a$, we use that the inner invariance
entropy is given by $h_{inv,int}(Q)=\log a$. If a solution with $x_{0}\in Q$
and control values $u_{i}\in U$ satisfies for $k\geq1$
\[
\varphi(k,x_{0},\omega)=a^{k-1}x_{0}+\sum_{i=0}^{k-2}a^{i}u_{i}\in
\mathrm{int}Q,
\]
then it follows for every $\delta\in(0,1)$ that $\delta u_{i}\in\delta
U=[-\delta,\delta]\subset\lbrack-1,1]=U$ for all $i$ and%
\[
\delta\varphi(k,x_{0},\omega)=a^{k-1}\delta x_{0}+\sum_{i=0}^{k-2}a^{i}\delta
u_{i}\in\mathrm{int}(\delta Q)\subset\mathrm{int}(Q).
\]
Hence the solution keeps the initial point $\delta x_{0}\in\delta Q$ with
control values $\delta u_{i}\in\delta U$ in $\mathrm{int}(\delta Q)$. Observe
that $f(\delta u_{i})=\left\vert \delta u_{i}\right\vert \leq\delta$.

Take $0<\delta<\frac{1}{a-1}-\varepsilon$. Then for $x_{0}\in Q=\left[
-\frac{1}{a-1}+\varepsilon,\frac{1}{a-1}-\varepsilon\right]  $ there are
$n\in\mathbb{N}$ and $\omega=(u_{i})$ with $u_{i}\in U=[-1,1]$ such that%
\[
\left\vert \varphi(n,x_{0},\omega)\right\vert \leq\delta\text{ and }%
\varphi(k,x_{0},\omega)\in Q\text{ for all }k=1,\ldots,n-1.
\]
This is seen as follows. If $x_{0}\in\left[  0,\frac{1}{a-1}-\varepsilon
\right]  $, we can make a step to the left of $x_{0}$ of length $l$ where
$l\in(0,(a-1)\varepsilon]$ is arbitrary. In fact, using the control value
$u_{0}=-1\in\lbrack-1,1]$ one obtains for $x_{1}=ax_{0}+u_{0}$ that
\[
x_{1}-x_{0}=ax_{0}-x_{0}-1\leq(a-1)\left(  \frac{1}{a-1}-\varepsilon\right)
-1=-(a-1)\varepsilon<0.
\]
Similarly, for $u_{0}=-1+(a-1)\varepsilon\in\lbrack-1,1]$, one computes
$x_{1}=x_{0}$ and hence, by continuity, one can make steps of length $l$ to
the left.

Analogously for $x_{0}\in\left[  \frac{1}{1-a}+\varepsilon,0\right]  $ one can
make steps to the right.

Going several steps, if necessary, one can reach the interval $(-\delta
,\delta)$ from each point of $Q$.

By the arguments above we know that we can stay in the interval $(-\delta
,\delta)$. Together we have shown that there is a time $n_{0}\in\mathbb{N}$
such that for every $x\in Q$ there is a control $\omega$ with $\varphi
(n_{0},x,\omega)\in(-\delta,\delta)$. By continuity, there are finitely many
controls $\omega_{1},\ldots,\omega_{N}$ such that for every $x\in Q$ there is
$\omega_{i}$ with $\varphi(n_{0},x,\omega_{i})\in(-\delta,\delta)$.

Now choose a finite $(n,Q)$-spanning set $\mathcal{S}$ with minimal
cardinality $\#\mathcal{S}=r_{inv,int}(n,Q)$. This yields the set
$\mathcal{S}_{\delta}:=\{\delta\omega;\omega\in\mathcal{S}\}$ of controls with
values in $[-\delta,\delta]$ which keep every element in $\delta Q$.
Concatenations of the controls in $\mathcal{S}_{\delta}$ with the controls
$\omega_{1},\ldots,\omega_{N}$ yields an $(n_{0}+n,Q)$-spanning set
$\mathcal{S}^{\prime}$ with cardinality $\#\mathcal{S}^{\prime}\leq
N\cdot\#\mathcal{S}$. For $k\in\{n_{0}+1,\ldots,n_{0}+n\}$, the controls in
$\mathcal{S}^{\prime}$ have values in $[-\delta,\delta]$, hence
$f(u)=\left\vert u\right\vert \leq\delta$ here.

We compute for $\omega^{\prime}=(u_{i})\in\mathcal{S}^{\prime}$%
\begin{align*}
(S_{n_{0}+n}f)(\omega^{\prime})  &  =\sum_{i=0}^{n_{0}+n-1}f(u_{i})=\sum
_{i=0}^{n_{0}-1}f(u_{i})+\sum_{i=n_{0}}^{n_{0}+n-1}f(u_{i})\\
&  \leq n_{0}\max_{u\in\lbrack-1,1]}\left\vert u\right\vert +n\max
_{u\in\lbrack-\delta,\delta]}\left\vert u\right\vert =n_{0}+n\delta.
\end{align*}
This yields%
\begin{align*}
a_{n+n_{0}}(f,Q)  &  \leq\sum_{\omega^{\prime}\in\mathcal{S}^{\prime}%
}e^{(S_{n+n_{0}}f)(\omega)}\leq\#\mathcal{S}^{\prime}\cdot e^{n_{0}+n\delta
}\leq N\cdot\#\mathcal{S}\cdot e^{n_{0}+n\delta}\\
&  =N\cdot r_{inv,int}(n,Q)\cdot e^{n_{0}+n\delta},
\end{align*}
and hence%
\begin{align*}
P_{int}(f,Q)  &  =\underset{n\rightarrow\infty}{\lim\sup}\frac{1}{n+n_{0}}\log
a_{n+n_{0}}(f,Q)\\
&  \leq\underset{n\rightarrow\infty}{\lim\sup}\left[  \frac{1}{n+n_{0}}\log
N+\frac{n}{n+n_{0}}\frac{1}{n}\log r_{inv,int}(n,Q)+\frac{n_{0}+n\delta
}{n+n_{0}}\right] \\
&  \leq\underset{n\rightarrow\infty}{\lim}\frac{1}{n}\log r_{inv,int}%
(n,Q)+\underset{n\rightarrow\infty}{\lim\sup}\frac{n_{0}+n\delta}{n+n_{0}}.
\end{align*}
Since $\frac{n_{0}+n\delta}{n+n_{0}}\leq2\delta$ for $n$ large enough it
follows that $P_{int}(f,Q)\leq h_{inv,int}(Q)+2\delta$ which implies
$P_{int}(f,Q)\leq h_{inv,int}(Q)$, since $\delta>0$ is arbitrary.
\end{example}

As announced in Remark \ref{rem2}, we conclude this section with some comments
on other versions of invariance pressure that can be constructed in analogy to
versions of invariance entropy, cf. Kawan \cite{Kawa13}.

Call a pair $(K,Q)$ of nonempty subsets of $X$ admissible for control system
(\ref{2.1}), if $K$ is compact and for each $x\in K$ there is $\omega
\in\mathcal{U}$ such that $\varphi(k,x,\omega)\in Q$ for all $k\in
\mathbb{N}_{0}$. Then for $n\in\mathbb{N}$ a subset $\mathcal{S}%
\subset\mathcal{U}$ is called $(n,K,Q)$-spanning if for all $x\in K$ there is
$\omega\in\mathcal{S}$ with $\varphi(k,x,\omega)\in Q$ for $k=0,1,\ldots,n$.
For $f\in C(U,\mathbb{R})$ define%
\[
a_{n}(f,K,Q):=\inf\left\{  \sum_{\omega\in\mathcal{S}}e^{(S_{n}f)(\omega
)};\ \mathcal{S}\text{ }(n,K,Q)\text{-spanning}\right\}  .
\]
Then one can define the invariance pressure as%
\[
P(f,K,Q):=\underset{n\rightarrow\infty}{\lim\sup}\frac{1}{n}\log
a_{n}(f,K,Q).
\]
Another version of invariance pressure can be defined as follows. For
$\varepsilon>0$, the $\varepsilon$-neighborhood $N_{\varepsilon}(Q)$ of
$Q\subset X$ is the set $N_{\varepsilon}(Q):=\{y\in X;$ there is $x\in Q$ with
$d(x,y)<\varepsilon\}$. Given a closed set $Q\subset X$, $\varepsilon>0$ and
$n\in\mathbb{N}$, a set $\mathcal{S}\subset\mathcal{U}$ is called
$(n,Q,N_{\varepsilon}(Q))$-spanning, if for all $x\in Q$ there is $\omega
\in\mathcal{S}$ with $\varphi(k,x,\omega)\in N_{\varepsilon}(Q)$ for all
$k=1,\ldots,n$. For $f\in C(U,\mathbb{R})$ define%
\[
a_{n}(\varepsilon,f,Q):=\inf\left\{  \sum_{\omega\in\mathcal{S}}%
e^{(S_{n}f)(\omega)};\ \mathcal{S}\text{ }(n,Q,N_{\varepsilon}%
(Q))\text{-spanning}\right\}  ,
\]
and%
\[
P(\varepsilon,f,Q):=\underset{n\rightarrow\infty}{\lim\sup}\frac{1}{n}\log
a_{n}(\varepsilon,f,Q).
\]
Then we define the outer invariance pressure as%
\[
P_{out}(f,Q)=\lim_{\varepsilon\rightarrow0}P(\varepsilon,f,Q).
\]
Clearly, $P_{out}(f,Q)=\sup_{\varepsilon>0}P(\varepsilon,f,Q)\leq
P_{int}(f,Q)$.

\section{Invariance pressure of continuous-time systems\label{Section4}}

In this section we discuss invariance pressure for control systems given by
ordinary differential equation and show that it can be characterized using
discretized time. Then we will derive a formula for the outer invariance
pressure of linear control systems.

Throughout we assume that $X$ is a $d$-dimensional smooth manifold,
$U\subset\mathbb{R}^{m}$ is Borel measurable and $\mathcal{U}=\{\omega
:\mathbb{R}\rightarrow U;\ $Lebesgue integrable$\}$. Consider the
continuous-time control system
\begin{equation}
\dot{x}(t)=F(x(t),\omega(t)) \label{4.1}%
\end{equation}
where $F:X\times U\rightarrow TX$ is continuous, $TX$ is the tangent bundle
and for each $u\in\mathbb{R}^{m}$ the map $F_{u}:=F(\cdot,u):X\rightarrow TX$
is a vector field. We assume that $Q\subset X$ is compact and that for all
$x\in Q$ and $\omega\in\mathcal{U}$ a unique solution $\varphi(t,x,\omega)\in
Q,t\geq0$, exists. Furthermore, we assume that $Q$ is controlled invariant,
i.e., for every $x\in Q$ there exists $\omega\in\mathcal{U}$ such that
$\varphi(t,x,\omega)\in Q$ for all $t\geq0$.

In analogy to the discrete-time case, we call a subset $\mathcal{S}%
\subset\mathcal{U}$ a $(\tau,Q)$-spanning set, if $\tau>0$ and for all $x\in
Q$, there exists $\omega\in\mathcal{S}$ such that $\varphi(t,x,\omega)\in Q$
for all $t\in\lbrack0,\tau]$.

For $\tau\geq0$ and $f\in C(U,\mathbb{R})$ define $(S_{\tau}f)(\omega
)=\int_{0}^{\tau}f(\omega(t))dt$ and
\[
a_{\tau}(f,Q):=\inf\left\{  \sum_{\omega\in\mathcal{S}}e^{(S_{\tau}f)(\omega
)};\ \mathcal{S}\text{ }(\tau,Q)\text{-spanning}\right\}  .
\]
The central definition is the following.

\begin{definition}
The invariance pressure in $Q$ of $f\in C(U,\mathbb{R})$ for the control
system (\ref{4.1}) is%
\[
P_{inv}(f,Q)=\limsup_{\tau\rightarrow\infty}\frac{1}{\tau}\log a_{\tau}(f,Q)
\]
and the invariance pressure of (\ref{4.1}) is the map $P_{inv}(\cdot
,Q):C(U,\mathbb{R})\rightarrow\mathbb{R}$.
\end{definition}

The next theorem shows that for the invariance pressure the time may be discretized.

\begin{theorem}
\label{discretiza}If $U$ is compact, then the invariance pressure of system
(\ref{4.1}) satisfies for every $\tau>0$%
\begin{equation}
P_{inv}(f,Q)=\limsup_{n\rightarrow\infty}\frac{1}{n\tau}\log a_{n\tau
}(f,Q)\text{ for all }f\in C(U,\mathbb{R}). \label{4.1b}%
\end{equation}

\end{theorem}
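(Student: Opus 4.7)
The plan is to prove both inequalities separately, using compactness of $U$ to control the weights. The inequality $P_{inv}(f,Q) \geq \limsup_{n\to\infty}\frac{1}{n\tau}\log a_{n\tau}(f,Q)$ is immediate, since $\{n\tau : n \in \mathbb{N}\}$ is a subset of times tending to infinity, so the $\limsup$ along this subsequence is at most the full $\limsup$ over $t \to \infty$ appearing in the definition of $P_{inv}(f,Q)$.

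For the reverse inequality I would first establish the key comparison $a_t(f,Q) \leq e^{M(n\tau - t)}\, a_{n\tau}(f,Q)$ whenever $t \leq n\tau$, where $M := \|f\|_\infty < \infty$ by compactness of $U$. The point is that every $(n\tau,Q)$-spanning set $\mathcal{S}$ is automatically $(t,Q)$-spanning (trajectories that remain in $Q$ on $[0,n\tau]$ in particular remain in $Q$ on $[0,t]$), and for each $\omega \in \mathcal{S}$,
\[
(S_t f)(\omega) = (S_{n\tau} f)(\omega) - \int_t^{n\tau} f(\omega(s))\,ds \leq (S_{n\tau} f)(\omega) + M(n\tau - t).
\]
Summing over $\mathcal{S}$ and taking the infimum over all $(n\tau,Q)$-spanning sets then yields the displayed inequality.

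Set $L := \limsup_{n\to\infty}\frac{1}{n\tau}\log a_{n\tau}(f,Q)$. If $L = +\infty$ there is nothing to prove, so assume $L < +\infty$ and fix any $C \in \mathbb{R}$ with $C > L$. Choose $N$ such that $\log a_{n\tau}(f,Q) \leq Cn\tau$ for all $n \geq N$. For $t \geq N\tau$, let $n := \lceil t/\tau \rceil$, so that $t \leq n\tau \leq t+\tau$ and $n \geq N$; a short case distinction on the sign of $C$ gives $Cn\tau \leq Ct + |C|\tau$. Combining this with the key comparison above,
\[
\log a_t(f,Q) \leq M(n\tau - t) + Cn\tau \leq Ct + (M + |C|)\tau,
\]
so dividing by $t$ and taking $\limsup_{t\to\infty}$ yields $P_{inv}(f,Q) \leq C$. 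Letting $C \downarrow L$ finishes the proof.

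The main technical point is the sign-dependent case distinction needed to absorb $Cn\tau$ into $Ct$ plus a $\tau$-sized additive error, since the inequality $n\tau \geq t$ points in opposite directions under multiplication by $C$ depending on its sign. Compactness of $U$ is essential throughout: without the uniform bound $M = \|f\|_\infty < \infty$ on $f$, the remainder integral $\int_t^{n\tau}f(\omega(s))\,ds$ could not be controlled uniformly in $\omega$, and the continuous-to-discrete comparison on which the whole argument hinges would break down.
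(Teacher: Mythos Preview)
Your proof is correct and rests on the same core idea as the paper's: compare $a_t(f,Q)$ with $a_{n\tau}(f,Q)$ for the nearest lattice point $n\tau \geq t$, using that an $(n\tau,Q)$-spanning set is automatically $(t,Q)$-spanning and that the weights differ only by a controlled factor.

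The paper handles this comparison slightly differently: instead of bounding the remainder integral by $M(n\tau-t)$ with $M=\|f\|_\infty$, it first replaces $f$ by $g:=f-\inf f\geq 0$, so that $s\mapsto a_s(g,Q)$ is monotone and one gets the clean inequality $a_{\tau_k}(g,Q)\leq a_{(n_k+1)\tau}(g,Q)$; the constant $\inf f$ is then added back via the analogue of Proposition~\ref{propr1}(ii). This shift-to-nonnegative trick avoids your sign-dependent case distinction on $C$ and, as the paper remarks afterwards, shows that only $\inf f>-\infty$ is actually needed rather than the full bound $\|f\|_\infty<\infty$. In fact your argument shares this feature: the only place $M$ enters is through $\int_t^{n\tau}f(\omega(s))\,ds\geq -M(n\tau-t)$, which requires merely a lower bound on $f$, so you could take $M=-\inf f$ and recover the same generality.
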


\begin{proof}
For every $f\in C(U,\mathbb{R})$, the inequality%
\[
P_{inv}(g,Q)\geq\limsup_{n\rightarrow\infty}\frac{1}{n\tau}\log a_{n\tau
}(g,Q)
\]
is obvious. For the converse note that the function $g(u):=f(u)-\inf f$ is
nonnegative (if $f\geq0$, it is not necessary to consider the function $g$).
Let $(\tau_{k})_{k\geq1}$, $\tau_{k}\in(0,\infty)$ and $\tau_{k}%
\rightarrow\infty$. Then for every $k\geq1$ there exists $n_{k}\geq1$ such
that $n_{k}\tau\leq\tau_{k}\leq(n_{k}+1)\tau$ and $n_{k}\rightarrow\infty$ for
$k\rightarrow\infty$. Since $g\geq0$ it follows that%
\[
a_{\tau_{k}}(g,Q)\leq a_{(n_{k}+1)\tau}(g,Q)
\]
and consequently
\[
\frac{1}{\tau_{k}}\log a_{\tau_{k}}(g,Q)\leq\frac{1}{n_{k}\tau}\log
a_{(n_{k}+1)\tau}(g,Q).
\]
This yields
\[
\limsup_{k\rightarrow\infty}\frac{1}{\tau_{k}}\log a_{\tau_{k}}(g,Q)\leq
\limsup_{k\rightarrow\infty}\frac{1}{n_{k}\tau}\log a_{(n_{k}+1)\tau}(g,Q).
\]
Since
\[
\frac{1}{n_{k}\tau}\log a_{(n_{k}+1)\tau}(g,Q)=\frac{n_{k}+1}{n_{k}}\frac
{1}{(n_{k}+1)\tau}\log a_{(n_{k}+1)\tau}(g,Q)
\]
and $\frac{n_{k}+1}{n_{k}}\rightarrow1$ for $k\rightarrow\infty$, we obtain
\[
\limsup_{k\rightarrow\infty}\frac{1}{\tau_{k}}\log a_{\tau_{k}}(g,Q)\leq
\limsup_{k\rightarrow\infty}\frac{1}{n_{k}\tau}\log a_{n_{k}\tau}%
(g,Q)\leq\limsup_{n\rightarrow\infty}\frac{1}{n\tau}\log a_{n\tau}(g,Q).
\]
This shows that
\[
P_{inv}(f-\inf f,Q)=\limsup_{n\rightarrow\infty}\frac{1}{n\tau}\log a_{n\tau
}(f-\inf f,Q),
\]
and as in Proposition \ref{propr1} (ii) we have
\begin{align*}
P_{inv}(f,Q)  &  =P_{inv}(f-\inf f,Q)+\inf f=P_{inv}(g,Q)+\inf f\\
&  =\limsup_{n\rightarrow\infty}\frac{1}{n\tau}\log a_{n\tau}(f-\inf f,Q)+\inf
f\\
&  =\limsup_{n\rightarrow\infty}\frac{1}{n\tau}\log e^{-n\inf f}a_{n\tau
}(f,Q)+\inf f\\
&  =\limsup_{n\rightarrow\infty}\frac{1}{n\tau}\log a_{n\tau}(f,Q).
\end{align*}

\end{proof}

The above result can be rephrased in the following form. Define the invariance
pressure at time $1$ of system (\ref{4.1}) by%
\[
P_{inv}^{1}(f,Q)=\limsup_{n\rightarrow\infty}\frac{1}{n}\log a_{n}(f,Q),f\in
C(U,\mathbb{R}),
\]
where
\[
a_{n}(f,Q):=\inf\left\{  \sum_{\omega\in\mathcal{S}}e^{(S_{n}f)(\omega
)};\ \mathcal{S}\text{ }(n,Q)\text{-spanning}\right\}  .
\]

\begin{corollary}
If $U$ is compact, then the invariance pressure of system (\ref{4.1})
satisfies%
\[
P_{inv}(f,Q)=P_{inv}^{1}(f,Q)\text{ for all }f\in C(U,\mathbb{R}).
\]

\end{corollary}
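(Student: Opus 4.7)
The plan is essentially immediate: the corollary is nothing more than Theorem \ref{discretiza} specialized to the choice $\tau = 1$. The first step is to invoke Theorem \ref{discretiza}, whose hypothesis (compactness of $U$) matches the hypothesis of the corollary, to obtain
\[
P_{inv}(f,Q) = \limsup_{n\to\infty} \frac{1}{n\tau} \log a_{n\tau}(f,Q)
\]
for every $\tau > 0$ and every $f \in C(U,\mathbb{R})$.

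The second step is to set $\tau = 1$ in this identity. The right-hand side then becomes $\limsup_{n\to\infty} \frac{1}{n} \log a_n(f,Q)$, which is by definition $P_{inv}^{1}(f,Q)$. This yields $P_{inv}(f,Q) = P_{inv}^{1}(f,Q)$, as claimed.

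There is no genuine obstacle here, since all the substantive work, namely passing from a continuous $\limsup$ over $\tau \to \infty$ to a $\limsup$ along the arithmetic subsequence $n\tau$, has already been carried out in the proof of Theorem \ref{discretiza} (where the reduction to a nonnegative integrand via $g = f - \inf f$ and the comparison of $a_{\tau_k}(g,Q)$ with $a_{(n_k+1)\tau}(g,Q)$ were the key ingredients, and compactness of $U$ was used to ensure $\inf f > -\infty$). The corollary is merely a restatement of that theorem in the convenient case $\tau = 1$, and the proof amounts to one line of quotation plus the substitution $\tau = 1$.
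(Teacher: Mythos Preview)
Your proof is correct and matches the paper's approach exactly: the paper presents this corollary without a separate proof, introducing it with the sentence ``The above result can be rephrased in the following form,'' so it is indeed just Theorem~\ref{discretiza} with $\tau = 1$.
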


\begin{remark}
Compactness of $U$ has been used in the proof of Theorem \ref{discretiza} only
in order to guarantee that $\inf f>-\infty$ for every $f\in C(U,\mathbb{R})$.
Thus the property in (\ref{4.1b}) holds for arbitrary $U$ if the considered
functions $f$ are bounded below.
\end{remark}

Next we determine the outer invariance pressure for a class of problems with
linear control systems. For a control system of the form (\ref{4.1}) the outer
invariance entropy is defined as follows (cf. Kawan \cite[p. 44]{Kawa13}). The
$\varepsilon$-neighborhood of $Q\subset X$ be denoted by $N_{\varepsilon
}(Q):=\{y\in X;$ there is $x\in Q$ with $d(x,y)<\varepsilon\}$.

Given a closed set $Q\subset X$, $\varepsilon>0$ and $\tau>0$, a set
$\mathcal{S}\subset\mathcal{U}$ is called $(\tau,Q,N_{\varepsilon}%
(Q))$-spanning, if for all $x\in Q$ there is $\omega\in\mathcal{S}$ with
$\varphi(t,x,\omega)\in N_{\varepsilon}(Q)$ for all $t\in\lbrack0,\tau]$.
Denote by $r_{inv}(\tau,\varepsilon,Q)$ denote the minimal number of elements
that a $(\tau,Q,N_{\varepsilon}(Q))$-spanning set can have and%
\begin{equation}
h_{inv}(\varepsilon,Q):=\underset{\tau\rightarrow\infty}{\lim\sup}\frac
{1}{\tau}\log r_{inv}(\tau,\varepsilon,Q)\text{.} \label{outer_ent}%
\end{equation}

\begin{definition}
The outer invariance entropy of a closed subset $Q\subset X$ is defined by%
\[
h_{inv,out}(Q):=\lim_{\varepsilon\rightarrow0}h_{inv}(\varepsilon,Q)\leq
\infty.
\]

\end{definition}

It is obvious that $h_{inv,out}(Q)=\sup_{\varepsilon>0}h_{inv}(\varepsilon
,Q)\leq h_{inv}(Q)$.\medskip

We consider linear control systems of the form%
\begin{equation}
\dot{x}(t)=Ax(t)+Bu(t),~u(t)\in U, \label{4.2}%
\end{equation}
where $A\in\mathbb{R}^{d\times d},B\in\mathbb{R}^{d\times m}$ and
$\varnothing\not =\mathrm{int}U$ with $U\subset\mathbb{R}^{m}$.

The following result is a consequence of Kawan \cite[Theorem 3.1 and its
proof]{Kawa13}.

\begin{theorem}
\label{Kawan3.1}Suppose that $Q\subset\mathbb{R}^{d}$ is a compact controlled
invariant set for system (\ref{4.2}) with $\mathrm{int}Q\not =\varnothing$.
Then%
\[
h_{inv,out}(Q)=\sum_{i=1}^{d}\max(0,\operatorname{Re}\mu_{i}),
\]
where summation is over all eigenvalues $\mu_{i}$ of $A$. Furthermore, the
same result holds if in the definition of the outer invariance entropy the
limit superior in the definition (\ref{outer_ent}) of $h_{inv}(\varepsilon,Q)$
is replaced by the limit inferior.
\end{theorem}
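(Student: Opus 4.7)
Since the statement is cited from Kawan's work, my plan is to outline the two-sided estimate that underlies it and to indicate why both the $\limsup$ and $\liminf$ versions agree.

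The easier direction is the lower bound $h_{inv,out}(Q) \geq \sum_i \max(0, \operatorname{Re} \mu_i)$, which I would prove by a volume expansion argument. Decompose $\mathbb{R}^d = E^+ \oplus E^{\leq}$, where $E^+$ is the sum of generalized eigenspaces of $A$ for eigenvalues with strictly positive real part, and let $\pi : \mathbb{R}^d \to E^+$ be the projection along $E^{\leq}$. Fix $\tau, \varepsilon > 0$ and a $(\tau, Q, N_\varepsilon(Q))$-spanning set $\mathcal{S}$, and put
\[
Q_\omega := \{x \in Q : \varphi(t,x,\omega) \in N_\varepsilon(Q) \text{ for all } t \in [0,\tau]\}.
\]
By linearity of (\ref{4.2}), $\varphi(\tau, x, \omega) - \varphi(\tau, y, \omega) = e^{A\tau}(x-y)$, so $e^{A\tau} Q_\omega$ sits in a translate of $N_\varepsilon(Q)$. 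Projecting to $E^+$ and using $|\det(e^{A\tau}|_{E^+})| = e^{\tau \sum_{\operatorname{Re} \mu_i > 0} \operatorname{Re} \mu_i}$ gives an upper bound on the $E^+$-volume of $\pi(Q_\omega)$ of order $e^{-\tau \sum \max(0, \operatorname{Re} \mu_i)}$. Since the $Q_\omega$ cover $Q$ and $\pi(Q)$ has positive $E^+$-measure (because $\mathrm{int}\, Q \neq \emptyset$), summing forces $\#\mathcal{S} \geq c \cdot e^{\tau \sum \max(0, \operatorname{Re} \mu_i)}$. This bound holds for every $\tau > 0$, so it controls both the $\limsup$ and the $\liminf$ in (\ref{outer_ent}).

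For the upper bound, I would construct explicit $(\tau, Q, N_\varepsilon(Q))$-spanning sets. The idea is that the stable and center directions contribute no exponential branching (any polynomial Jordan-block growth is absorbed into the $\varepsilon$-fattening), while the unstable directions require a grid of size $\asymp e^{\tau \sum \max(0, \operatorname{Re} \mu_i)}$. Concretely, one picks a controlled-invariant base trajectory and perturbs it using controls from $\mathrm{int}\, U$, exploiting openness of $U$ and controllability of the linearization on $E^+$ to keep each perturbed trajectory in $N_\varepsilon(Q)$ on $[0, \tau]$. Since the resulting spanning sets can be built for every $\tau > 0$, this bounds the $\limsup$ (hence also the $\liminf$) in (\ref{outer_ent}) from above by the same quantity.

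The main obstacle, and the reason the statement defers to \cite[Theorem 3.1 and its proof]{Kawa13}, is this upper-bound construction: one must handle Jordan blocks carefully and produce a grid on $E^+$ that remains controllable into $N_\varepsilon(Q)$ under the full affine dynamics, not merely under the linear flow $e^{A\tau}$. Once both bounds are in hand, they apply uniformly in $\tau$, so the identity $h_{inv,out}(Q) = \sum_i \max(0, \operatorname{Re} \mu_i)$ holds whether one defines $h_{inv}(\varepsilon, Q)$ using $\limsup$ or $\liminf$ in (\ref{outer_ent}).
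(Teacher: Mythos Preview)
The paper does not prove this theorem at all: it is stated as ``a consequence of Kawan \cite[Theorem 3.1 and its proof]{Kawa13}'' and is invoked as a black box in the proof of Theorem~\ref{Theorem_linear}. Your proposal correctly recognizes this and supplies a sketch of the standard argument (volume growth on the unstable subspace for the lower bound, an explicit grid-type spanning construction for the upper bound, with both estimates uniform in $\tau$ so that the $\limsup$ and $\liminf$ versions agree). There is therefore nothing in the paper to compare against; your outline is consistent with the approach in Kawan's monograph, and your candid acknowledgment that the delicate part is the upper-bound construction is appropriate.
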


\begin{remark}
The existence of a compact controlled invariant set $Q$ with nonempty interior
can be guaranteed if the matrix pair $(A,B)$ is controllable (i.e.,
$\mathrm{rank}$ $[B,$ $AB,$ $\ldots,$ $A^{d-1}B]$ $=d$) and the matrix $A$ is
hyperbolic (i.e., it has no eigenvalues on the imaginary axis).
\end{remark}

Theorem \ref{Kawan3.1} will be used to prove a theorem on outer invariance
pressure which we define in the following way. For the general system
(\ref{4.1}), $f\in C(U,\mathbb{R})$ and $\varepsilon>0$ let%
\begin{align*}
a_{\tau}(\varepsilon,f,Q)  &  :=\inf\left\{  \sum_{\omega\in\mathcal{S}%
}e^{(S_{\tau}f)(\omega)};\ \mathcal{S}\text{ }(\tau,Q,N_{\varepsilon
}(Q))\text{-spanning}\right\}  ,\\
P_{inv}(\varepsilon,f,Q)  &  :=\underset{\tau\rightarrow\infty}{\lim\sup}%
\frac{1}{\tau}\log a_{\tau}(\varepsilon,f,Q).
\end{align*}

\begin{definition}
For $f\in C(U,\mathbb{R})$ the outer invariance pressure in $Q$ is defined by
$P_{out}(f,Q)=\lim_{\varepsilon\rightarrow0}P_{inv}(\varepsilon,f,Q)$ and the
outer invariance pressure of the control system (\ref{4.1}) is the map
$P_{out}(\cdot,Q):C(U,\mathbb{R})\rightarrow\mathbb{R}$.
\end{definition}

We get the following formula for the outer invariance pressure of linear systems.

\begin{theorem}
\label{Theorem_linear}Consider the linear control system (\ref{4.2}) with
compact convex control range $U$. Let $Q\subset\mathbb{R}^{d}$ be compact and
let $f\in C(U,\mathbb{R})$ be a map such that there are $u_{0}\in U$ and
$x_{0}\in\mathrm{int}Q$ with $f(u_{0})=\min_{u\in U}f(u)$ and $Ax_{0}%
+Bu_{0}=0$ (i.e., $x_{0}$ is an equilibrium for $u_{0}$), and assume that
there is $T_{0}>0$ such that for every $x\in Q$ there are $T\in(0,T_{0}]$ and
$\omega\in\mathcal{U}$ with%
\begin{equation}
\varphi(T,x,\omega)=x_{0}\text{ and }\varphi(t,x,\omega)\in Q\text{ for all
}t\in(0,T]. \label{4.2b}%
\end{equation}
Then the outer invariance pressure is%
\begin{equation}
P_{out}(f,Q)=f(u_{0})+h_{inv,out}(Q)=f(u_{0})+\sum_{i=1}^{d}\max
(0,\operatorname{Re}\mu_{i}), \label{4.3}%
\end{equation}
where summation is over all eigenvalues $\mu_{i}$ of $A$.
\end{theorem}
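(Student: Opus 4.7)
The plan is to prove the first equality in (\ref{4.3}) by establishing both inequalities, after which Theorem \ref{Kawan3.1} supplies the closed-form expression in terms of the eigenvalues of $A$. The lower bound $P_{out}(f,Q)\geq f(u_0)+h_{inv,out}(Q)$ is immediate: since $f(u)\geq f(u_0)$ on $U$, any $(\tau,Q,N_\varepsilon(Q))$-spanning set $\mathcal{S}$ satisfies $\sum_{\omega\in\mathcal{S}}e^{(S_\tau f)(\omega)}\geq(\#\mathcal{S})\,e^{\tau f(u_0)}$, so $a_\tau(\varepsilon,f,Q)\geq r_{inv}(\tau,\varepsilon,Q)\,e^{\tau f(u_0)}$; dividing by $\tau$, taking $\limsup_{\tau\to\infty}$ and then $\varepsilon\to 0$ yields the inequality.

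For the upper bound the idea is to exploit linearity so as to produce spanning controls that stay close to $u_0$ for almost the whole time interval. Given $\varepsilon,\delta>0$, pick $\eta>0$ with $f(u)\leq f(u_0)+\delta$ for $u\in V:=U\cap B_\eta(u_0)$, set $\tilde U:=U-u_0$ and $K_1:=Q-x_0$, and choose $\lambda\in(0,1]$ so small that $\lambda\tilde U\subset B_\eta(0)$ and $\lambda K_1\subset B_{\varepsilon/2}(0)$. In shifted variables $y=x-x_0$, $v=u-u_0$, the system becomes $\dot y=Ay+Bv$, and $\lambda K_1$ is a compact controlled invariant set for the shifted system with control range $\lambda\tilde U$. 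Combining the rescaling $y\mapsto y/\lambda$, $v\mapsto v/\lambda$ with the translation $x\leftrightarrow y$ gives
\[
r_{inv}(\sigma,\varepsilon',\lambda K_1)_{\lambda\tilde U}\;=\;r_{inv}(\sigma,\varepsilon'/\lambda,Q)
\]
for all $\sigma,\varepsilon'>0$.

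The efficient $(\tau,Q,N_\varepsilon(Q))$-spanning set is built as a concatenation of a Phase~1 of length $T_0$ followed by a Phase~2 of length $\tau-T_0$. For Phase~1, hypothesis (\ref{4.2b}) provides, for each $x\in Q$, a control reaching $x_0$ in time $T(x)\leq T_0$, which I extend by the constant value $u_0$ on $(T(x),T_0]$; since $x_0$ is an equilibrium for $u_0$, this yields a length-$T_0$ control with $\varphi(T_0,x,\cdot)=x_0$. Continuity of the flow and compactness of $Q$ then give finitely many such controls $\omega_1^{(1)},\dots,\omega_1^{(N)}$ with the property that every $x\in Q$ is sent by some $\omega_1^{(i)}$ into $x_0+\lambda K_1$ at time $T_0$, with trajectory remaining in $N_\varepsilon(Q)$ throughout $[0,T_0]$. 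For Phase~2, I take a minimal $(\tau-T_0,\lambda K_1,N_{\varepsilon'}(\lambda K_1))$-spanning set for the scaled shifted system (with $\varepsilon'\leq\varepsilon/2$) and translate back. The concatenated set is $(\tau,Q,N_\varepsilon(Q))$-spanning of cardinality at most $N\cdot r_{inv}(\tau-T_0,\varepsilon'/\lambda,Q)$, and each element has Birkhoff sum at most $C+(\tau-T_0)(f(u_0)+\delta)$ for a constant $C$ depending only on $T_0$ and $\max_U f$, since the Phase~2 controls lie in $u_0+\lambda\tilde U\subset V$. Dividing by $\tau$, taking $\limsup_{\tau\to\infty}$, then $\varepsilon'\to 0$, $\delta\to 0$, and finally $\varepsilon\to 0$ yields the upper bound.

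The main obstacle is the Phase~1 construction: from the pointwise reachability hypothesis (\ref{4.2b}) I must extract a finite family of controls sending every point of $Q$ into the potentially very small target $x_0+\lambda K_1$ while keeping trajectories inside $N_\varepsilon(Q)$. The continuity-plus-compactness argument has to be carried out only after $\lambda$ has been fixed, so $N=N(\lambda,\varepsilon)$ can be very large; fortunately it contributes only an additive $(\log N)/\tau$ term that vanishes in the innermost limit $\tau\to\infty$, which is what makes the order of limits work.
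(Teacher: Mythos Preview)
Your proposal is correct and follows essentially the same strategy as the paper: the lower bound comes from $f\geq f(u_0)$ exactly as you write, and the upper bound is obtained by concatenating a finite Phase~1 family (steering every point near the equilibrium $x_0$ in a fixed time $T_0$, using hypothesis (\ref{4.2b}) plus continuity and compactness) with a Phase~2 spanning set built from a scaled copy of a minimal spanning set for $Q$, so that the Phase~2 controls stay in a small neighborhood of $u_0$ where $f\leq f(u_0)+\delta$. The only cosmetic difference is that the paper first normalizes to $u_0=0$, $x_0=0$, $f(0)=0$ and then works with the single scaling factor $\beta$, whereas you keep the translations explicit and carry the scaling identity $r_{inv}(\sigma,\varepsilon',\lambda K_1)_{\lambda\tilde U}=r_{inv}(\sigma,\varepsilon'/\lambda,Q)$; since $h_{inv}(\varepsilon'/\lambda,Q)\leq h_{inv,out}(Q)$ for any $\varepsilon'>0$, your $\varepsilon'\to 0$ step is in fact unnecessary, but harmless.
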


\begin{proof}
Note that our assumption on $Q$ implies that $Q$ is controlled invariant. Then
the second equality in (\ref{4.3}) is an immediate consequence of Theorem
\ref{Kawan3.1}. We will prove the first equality in (\ref{4.3}) in three steps.

\underline{Step 1:} First we will simplify the assertion. Define
$g(v):=f(u+u_{0})$ on $V:=U-u_{0}$. Then $g(0)=f(u_{0})\leq f(u)=g(u-u_{0})$
for all $u\in U$, hence $g(0)=\min_{v\in V}g(v)$. Consider the control system%
\begin{equation}
\dot{y}(t)=Ay(t)+Bv(t),v(t)\in V. \label{15}%
\end{equation}
A trajectory $\varphi(\cdot,x,\omega)$ of (\ref{4.2}) determines a trajectory
$\psi(\cdot,x-x_{0},\omega-u_{0})$ of (\ref{15}) (here $u_{0}$ is identified
with the corresponding constant control function) and conversely, since%
\begin{align*}
\psi(t,x-x_{0},\omega-u_{0})  &  =e^{At}(x-x_{0})+\int_{0}^{t}e^{A(t-s)}%
B(\omega(s)-u_{0})ds\\
&  =e^{At}x+\int_{0}^{t}e^{A(t-s)}B\omega(s)ds-\left[  e^{At}x_{0}+\int
_{0}^{t}e^{A(t-s)}Bu_{0}ds\right] \\
&  =\varphi(t,x,\omega)-x_{0}.
\end{align*}
Thus $\varphi(t,x,\omega)\in N_{\varepsilon}(Q)$ implies that $\psi
(t,x-x_{0},\omega-u_{0})\in N_{\varepsilon}(Q)-x_{0}=N_{\varepsilon}(Q-x_{0}%
)$. The controllability condition for (\ref{4.2}) implies that for every
$x-x_{0}\in Q-x_{0}$
\[
\psi(T,x-x_{0},\omega-u_{0})=0\text{ and }\psi(t,x-x_{0},\omega-u_{0})\in
Q-x_{0}\text{ for all }t\in\lbrack0,T].
\]
Furthermore, $0\in\mathrm{int}(Q-x_{0})$ since $x_{0}\in\mathrm{int}Q$. It
follows that the $(\tau,Q,\mathrm{int}Q)$-spanning sets $\mathcal{S}$ of
system (\ref{4.2}) give rise to $(\tau,Q-x_{0},\mathrm{int}(Q-x_{0}%
))$-spanning sets $\mathcal{S}-u_{0}$ of system (\ref{15}) and conversely.
Then it follows that the outer invariance pressure $P_{out}(f,Q)$ of system
(\ref{4.2}) coincides with the outer invariance pressure $P_{out}(g,Q-x_{0})$
of system (\ref{15}).

These considerations imply that without loss of generality, we can assume that
$0\in U$ and that $Q\subset\mathbb{R}^{d}$ is a compact set with
$0\in\mathrm{int}Q$ such that for every $x\in Q$ there are $T>0$ and
$\omega\in\mathcal{U}$ with%
\[
\varphi(T,x,\omega)=0\text{ and }\varphi(t,x,\omega)\in Q\text{ for all }%
t\in(0,T]
\]
and that $f\in C(U,\mathbb{R})$ with $f(0)=\min_{u\in U}f(u)$ (we just write
$U$ instead of $U-u_{0}$, $Q$ instead of $Q-x_{0}$ and $f$ instead of $g$).

Then, using the same arguments as in the proof of Proposition \ref{propr1}%
(ii), we find that%
\[
P_{out}(f,Q)=P_{out}(f-f(0),Q)+f(0).
\]
Hence we can further assume without loss of generality that $0=f(0)=\min_{u\in
U}f(u)$. Then the claim takes the form $P_{out}(f,Q)=h_{inv,out}(Q)$.

\underline{Step 2:} Next we show $P_{out}(f,Q)\geq h_{inv,out}(Q)$. Clearly,
it is sufficient to show for all $\varepsilon>0$ that $P_{inv}(\varepsilon
,f,Q)\geq h_{inv}(\varepsilon,Q)$. Using (\ref{4.2b}) together with the fact
that $0$ is an equilibrium, one finds that for every $\tau\geq T_{0}$ and
every $x\in Q$ that there is a control $\omega_{x}$ with $\varphi
(\tau,x,\omega_{x})=0$ and $\varphi(t,x,\omega_{x})\in Q$ for all $t\in
\lbrack0,\tau]$. By uniform continuity in $t\in\lbrack0,\tau]$ there is a
neighborhood of $x$ such that for every $y$ in this neighborhood one has
\[
\varphi(t,y,\omega_{x})\in N_{\varepsilon}(Q)\text{ for all }t\in\lbrack
0,\tau].
\]
Then compactness of $Q$ implies that there is a finite $(\tau,Q,N_{\varepsilon
}(Q))$-spanning set.

Let $\delta>0$. Then for arbitrarily large $\tau$ one finds a finite
$(\tau,Q,N_{\varepsilon}(Q))$-spanning set $\mathcal{S}$ with%
\begin{align*}
P(\varepsilon,f,Q)  &  =\underset{\tau^{\prime}\rightarrow\infty}{\lim\sup
}\frac{1}{\tau^{\prime}}a_{\tau^{\prime}}(\varepsilon,f,Q)\geq\frac{1}{\tau
}\log a_{\tau}(\varepsilon,f,Q)-\delta\\
&  \geq\frac{1}{\tau}\log\sum_{\omega\in\mathcal{S}}e^{(S_{\tau}g)(\omega
)}-2\delta.
\end{align*}
Since $\mathcal{S}$ is $(\tau,Q,N_{\varepsilon}(Q))$-spanning, it follows that
$\#\mathcal{S}\geq r_{inv}(\varepsilon,\tau,Q)$ and, by assumption we also
know that $f(u)\geq f(0)=0$ for all $u\in U$.\ This implies for arbitrarily
large $\tau$, that%
\[
P(\varepsilon,f,Q))\geq\frac{1}{\tau}\#\mathcal{S}-2\delta\geq\frac{1}{\tau
}r_{inv}(\tau,\varepsilon,Q)-2\delta.
\]
For $\tau\rightarrow\infty$ it follows that%
\[
P(\varepsilon,f,Q)\geq\underset{\tau\rightarrow\infty}{\lim\inf}\frac{1}{\tau
}r_{inv}(\tau,\varepsilon,Q)-2\delta.
\]
Since $\delta>0$ is arbitrary, it follows that this inequality also holds for
$\delta=0$. For $\varepsilon\rightarrow0$, this yields%
\[
P_{out}(f,Q)=\lim_{\varepsilon\rightarrow0}P(\varepsilon,f,Q)\geq
\lim_{\varepsilon\rightarrow0}\underset{\tau\rightarrow\infty}{\lim\inf}%
\frac{1}{\tau}r_{inv}(\tau,\varepsilon,Q)=h_{inv,out}(Q).
\]
The last equality follows by the additional property stated in Theorem
\ref{Kawan3.1}.

\underline{Step 3:} Finally we show $P_{out}(f,Q)\leq h_{inv,out}(Q)$. Fix
$\varepsilon>0$. The assertion will follow if we can show that for every
$\delta>0$%
\[
P(\varepsilon,f,Q)\leq h_{inv}(\varepsilon,Q)+\delta.
\]
The strategy will be similar as in Example \ref{Example18}:\ Every point in
$Q$ is steered into a small neighborhood of $0\in\mathbb{R}^{d}$ and kept
there by a spanning set constructed using linearity of the system equation.

Take $\delta>0$. Since $0\in\mathrm{int}Q$ there is $\alpha\in(0,1)$ such that
the $\alpha$-ball $N_{\alpha}(0)$ around $0$ with radius $\alpha$ is contained
in $\mathrm{int}Q$. We may choose $\alpha>0$ small enough such that
$\left\vert u\right\vert <\alpha$ implies $f(u)\leq\delta$. The
variation-of-constants formula shows that for $\beta>0$ every trajectory
$\varphi(t,x_{0},u),t\geq0,$ of system (\ref{4.2}) satisfies%
\[
\beta\varphi(t,x_{0},u)=e^{At}\beta x_{0}+\int_{0}^{t}e^{A(t-s)}B\beta
u(s)ds=\varphi(t,\beta x_{0},\beta u),t\geq0.
\]
Take $\beta<\alpha$ small enough such that $\beta Q\subset N_{\alpha}(0)$ in
$\mathbb{R}^{d}$ and $\beta U\subset N_{\alpha}(0)$ in $\mathbb{R}^{m}$. Then
the controls $\beta u$ take values in $\beta U$ which is a subset of $U$ by
convexity of $U$. Note also that $N_{\alpha}(0)\subset Q$ implies
$N_{\alpha\beta}(0)\subset\beta Q$.

As in Step 2, there is for every $x\in Q$ a control $\omega_{x}\in\mathcal{U}$
with%
\[
\varphi(T_{0},x,\omega_{x})=0\text{ and }\varphi(t,x,\omega_{x})\in Q\text{
for all }t\in(0,T_{0}].
\]
By uniform continuity on $[0,T_{0}]$ one finds for all $y$ in a neighborhood
of $x$ that
\[
\left\Vert \varphi(T_{0},y,\omega_{x})\right\Vert <\alpha\beta\text{ and
}\varphi(t,y,\omega_{x})\in N_{\varepsilon}(Q)\text{ for all }t\in
\lbrack0,T_{0}].
\]
Then compactness of $Q$ implies that there are finitely many controls
$\omega_{1},\ldots,\omega_{N}$\ such that for every $x\in Q$ there is
$\omega_{i}$ with%
\begin{equation}
\left\Vert \varphi(T_{0},x,\omega_{i})\right\Vert <\alpha\beta\text{ and
}\varphi(t,y,\omega_{i})\in N_{\varepsilon}(Q)\text{ for all }t\in
\lbrack0,T_{0}]. \label{19}%
\end{equation}
Thus we have found finitely many controls steering every point in $Q$ into
$N_{\alpha\beta}(0)\subset\beta Q\subset N_{\alpha}(0)\subset\mathrm{int}Q$.
Next we construct controls keeping every point in the ball $N_{\alpha\beta
}(0)$ in the $\varepsilon$-neighborhood of $N_{\varepsilon}(Q)$ (on
arbitrarily large time intervals).

Fix $\tau>0$ and let $\mathcal{S}=\left\{  \omega_{1}^{\prime},\ldots
,\omega_{M}^{\prime}\right\}  $ be a $(\tau,Q,N_{\varepsilon}(Q))$-spanning
set with $\#\mathcal{S}=r_{inv}(\tau,\varepsilon,Q)$. Then it follows that
$\mathcal{S}_{\beta}:=\left\{  \beta\omega_{1}^{\prime},\ldots,\beta\omega
_{M}^{\prime}\right\}  $ is $(\tau,\beta Q,N_{\varepsilon}(Q))$-spanning. The
controls $\beta u$ take values in $\beta U\subset N_{\alpha}(0)\cap U$.
Obviously, $\#\mathcal{S}_{\beta}=M=\#\mathcal{S}=r_{inv}(\tau,\varepsilon,Q)$.

The concatenations of the controls $\omega_{1},\ldots,\omega_{N}$ with the
controls in $\mathcal{S}_{\beta}$ are given for $i=1,\ldots,N$ and
$j=1,\ldots,M$ by%
\[
\omega_{ij}(t):=\left\{
\begin{array}
[c]{ccc}%
\omega_{i}(t) & \text{for} & t\in\lbrack0,T_{0}]\\
\omega_{j}^{\prime}(t-T_{0}) & \text{for} & t>T_{0}%
\end{array}
\right.  .
\]
Now consider $\tau^{\prime}:=\tau+T_{0}$. Then the set%
\[
\mathcal{S}^{\prime}=\left\{  \omega_{ij};~i\in\{1,\ldots,N\}\text{ and }%
j\in\{1,\ldots,M\}\right\}
\]
is $(\tau^{\prime},Q,N_{\varepsilon}(Q))$-spanning. This follows, since
$N_{\alpha\beta}(0)\subset\beta Q$ implies by (\ref{19}) that all points
$\varphi(T_{0},x,\omega_{i})\in\beta Q$. On the interval $[T_{0},\tau^{\prime
}]$ each control only takes values in $\beta U\subset N_{\alpha}(0)$, hence
$f(u)\leq\delta$ here. We have $\#\mathcal{S}^{\prime}=N\cdot M=N\cdot
r_{inv}(\tau,\varepsilon,Q)$ and compute for $\omega_{ij}\in\mathcal{S}%
^{\prime}$%
\begin{align*}
(S_{\tau^{\prime}}f)(\omega_{ij})  &  =\int_{0}^{\tau^{\prime}}f(\omega
_{ij}(\sigma))d\sigma=\int_{0}^{T_{0}}f(\omega_{ij}(\sigma))d\sigma
+\int_{T_{0}}^{\tau^{\prime}}f(\omega_{ij}(\sigma))d\sigma\\
&  \leq T_{0}\max_{u\in U}f(u)+(\tau^{\prime}-T_{0})\max_{\left\vert
u\right\vert \leq\delta}f(u)\leq T_{0}\max_{u\in U}f(u)+\tau\delta.
\end{align*}
This yields%
\begin{align*}
\log a_{\tau^{\prime}}(\varepsilon,f,Q)  &  \leq\log\sum_{\omega_{ij}%
\in\mathcal{S}^{\prime}}e^{(S_{\tau^{\prime}}f)(\omega_{ij})}\\
&  \leq\log\sum_{\omega_{ij}\in\mathcal{S}^{\prime}}e^{T_{0}\max_{u\in
U}f(u)+\tau\delta}\\
&  \leq\log\#\mathcal{S}^{\prime}+T_{0}\max_{u\in U}f(u)+\tau\delta\\
&  \leq\log N+T_{0}\max_{u\in U}f(u)+\tau\delta+\log r_{inv}(\tau
,\varepsilon,Q).
\end{align*}
Note that
\[
\lim_{\tau^{\prime}\rightarrow\infty}\frac{\tau}{\tau^{\prime}}\frac{1}{\tau
}\log r_{inv}(\tau,\varepsilon,Q)=h_{inv}(\varepsilon,Q).
\]
Let $\tau_{k}\rightarrow\infty$ such that for $\tau_{k}^{\prime}=\tau
_{k}+T_{0}$%
\[
P(\varepsilon,f,Q)=\lim_{k\rightarrow\infty}\frac{1}{\tau_{k}^{\prime}}\log
a_{\tau_{k}^{\prime}}(\varepsilon,f,Q).
\]
For $k$ large enough%
\[
\frac{1}{\tau_{k}^{\prime}}\left[  \log N+T_{0}\max_{u\in U}f(u)+\tau
_{k}\delta\right]  \leq\delta,
\]
hence it follows that
\[
P(\varepsilon,f,Q)=\lim_{k\rightarrow\infty}\frac{1}{\tau_{k}^{\prime}}\log
a_{\tau_{k}^{\prime}}(\varepsilon,f,Q)\leq h_{inv}(\varepsilon,Q)+\delta.
\]
Since $\delta>0$ is arbitrary, this implies $P(\varepsilon,f,Q)\leq
h(\varepsilon,Q)$ and the proof is complete.
\end{proof}

\end{document}